\theoremstyle{definition}
\newtheorem{theorem}{Theorem}
\newtheorem{lemma}{Lemma}
\newtheorem{assumption}{Assumption}
\newtheorem{proposition}{Proposition}
\newtheorem{corollary}{Corollary}
\newtheorem{definition}{Definition}
\newtheorem{remark}{Remark}
 \DeclareMathOperator{\sign}{sign}
\let\normalequation=\equation
\def\equation{\@ifnextchar[{\subequation}{\normalequation}}
\def\subequation[#1]#2{\@ifundefined{r@#1}%
  {\def\theequation{\bf ??#2}\@warning
    {Reference `#1' on page \thepage \space
     undefined}}{\edef\@tempa{\@nameuse{r@#1}}%
    \edef\theequation{\expandafter\@car\@tempa \@nil#2}}%
  \let\@currentlabel\theequation $$}
\begin{document}
\begin{titlepage}
\begin{center}
{\large\bf New criterion of asymptotic stability for delay systems with
time-varying structures and delays}\footnote{This work is jointly supported
by the National Natural Sciences Foundation of China under Grant Nos.
61273211 and 61273309, the Foundation for the Author of National Excellent
Doctoral Dissertation of P.R. China No. 200921, and China Postdoctoral
Science Foundation No. 2011M500065, the Marie Curie International Incoming
Fellowship from the European Commission (FP7-PEOPLE-2011-IIF-302421).}
\\[0.2in]
\begin{center}
Bo Liu\footnote{Bo Liu is with Key Laboratory of Intelligent Perception and
Image Understanding of Ministry of Education of China, Xidian University,
Xi'an
 710071, P.R. China, and Institute of
Industrial Science, The University of Tokyo, 4-6-1 Komaba, Meguro-ku, Tokyo
153-8505, Japan (liu7bo9@gmail.com). }, Wenlian Lu\footnote{Wenlian Lu is
with Centre for Computational Systems Biology, and Laboratory of
Mathematics for Nonlinear Science, School of Mathematical Sciences, Fudan
University, Shanghai, People's Republic of China (wenlian@fudan.edu.cn).},
Tianping Chen\footnote{Tianping Chen is with the School of
Computer/Mathematical
Sciences, Fudan University, 200433, Shanghai, China. \\
\indent ~~Corresponding author: Tianping Chen. Email: tchen@fudan.edu.cn}
\end{center}
\end{center}

\begin{abstract}
In this paper, we study asymptotic stability of the zero solution of a
class of differential systems governed by a scalar differential inequality
with time-varying structures and delays. We establish a new generalized
Halanay inequality for the asymptotic stability of the zero solution for
such systems under more relaxed conditions than the existing ones. We also
apply the theoretical results to the analysis of self synchronization in
networks of delayed differential systems and obtained a more general
sufficient condition for self synchronization.
\end{abstract}

Key words: Delay systems; Time-varying systems; Neural networks;
Synchronization

\end{titlepage}

\pagestyle{plain}

\section{Introduction}\quad
The applications of delay differential equations can be found in
many areas including control systems, neural networks, and many
others. And a fundamental problem in these applications is to
determine the stability of the solutions, which has been analyzed
for decades. For example, in order to analyze the asymptotic
stability of the zero solution of the following delay-differential
equations with fixed delay $\tau>0$,
\begin{eqnarray}
  \dot{x}(t)=-ax(t)+bx(t-\tau),
\end{eqnarray}
Halanay(1966) proved the following inequality which was later called
{\em Halanay inequality}.
\begin{proposition}\label{propOrigin}
Let $x(t)>0$, $t\in\mathbb{R}$, be a differentiable scalar function
of $t$ that satisfies
\begin{eqnarray}
  \dot{x}(t)\le -ax(t)+b\sup_{t-\tau\le s\le t}x(s),&t\ge t_{0}\\
  x(t)=\psi(t),&t\le t_{0}
\end{eqnarray}
with $a>b>0$ being constants and $\psi(t)\ge 0$ continuous and
bounded for $t\le t_{0}$, then there exist $k>0$ and $\gamma>0$ such
that $x(t)\le ke^{-\gamma(t-t_{0})}$. Hence $x(t)\to 0$ as
$t\to\infty$.
\end{proposition}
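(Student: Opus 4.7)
My plan is the classical exponential-comparison argument: dominate $x$ pointwise by a decaying exponential $y(t)=k e^{-\gamma(t-t_0)}$ and rule out a first crossing by plugging this bound back into the hypothesized differential inequality.

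First I would choose the decay rate. Set $f(\gamma)=\gamma-a+b e^{\gamma\tau}$; since $f(0)=b-a<0$ (this is precisely where the hypothesis $a>b$ enters) and $f$ is continuous, I can fix $\gamma>0$ small enough that $f(\gamma)<0$, i.e.\ $-a+b e^{\gamma\tau}<-\gamma$. Then set $k=\sup_{t\le t_0}\psi(t)$, which is finite and nonnegative by hypothesis; this gives $x(t)=\psi(t)\le k\le k e^{-\gamma(t-t_0)}$ for all $t\le t_0$, so the target bound automatically holds on the initial history. Define $y(t)=k e^{-\gamma(t-t_0)}$.

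The heart of the proof is a first-crossing contradiction. Suppose $x(t)\le y(t)$ fails somewhere on $(t_0,\infty)$ and let $t^*$ be the infimum of such times; by continuity $x(t^*)=y(t^*)$ and $x(s)\le y(s)$ for all $s\le t^*$, so (in a right-hand-derivative sense) $\dot x(t^*)\ge \dot y(t^*)=-\gamma\,y(t^*)$. On the other hand, since $y$ is decreasing the supremum over $[t^*-\tau,t^*]$ is attained at $s=t^*-\tau$, and the hypothesis gives $\dot x(t^*)\le -a\,y(t^*)+b\,y(t^*-\tau)=y(t^*)\bigl(-a+b e^{\gamma\tau}\bigr)<-\gamma\,y(t^*)$, contradicting the previous inequality. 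Hence $x(t)\le k e^{-\gamma(t-t_0)}$ for all $t\ge t_0$.

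I expect the main subtlety to lie in the derivative comparison at $t^*$: strictly speaking the inequality $\dot x(t^*)\ge \dot y(t^*)$ is a one-sided-derivative statement, while the differential inequality on $x$ only provides an upper bound on $\dot x$. A clean workaround is to replace $k$ by $k+\varepsilon$, prove the strict bound on $[t_0,\infty)$, and then let $\varepsilon\to 0^+$; alternatively, one can invoke directly that the right Dini derivative of $x-y$ must be nonnegative at an infimum first-crossing point. Everything else is routine algebra driven by the choice of $\gamma$ in the first step, and that choice is the only place where the strict gap $a>b$ is essential.
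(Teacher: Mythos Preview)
Your argument is the classical exponential-comparison proof of the Halanay inequality and is correct; the $\varepsilon$-perturbation you mention is exactly the standard way to make the first-crossing step airtight. However, the paper does not actually prove Proposition~\ref{propOrigin}: it is stated as background, attributed to Halanay (1966), and used only to motivate the generalizations that follow. So there is no paper proof to compare against; your write-up is essentially the textbook argument and would serve perfectly well as a self-contained proof of the cited result.
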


Later on, this inequality has been extended to more general types of
delay differential equations. For example, in Baker \& Tang
(1996),Wen, Yu \& Wang (2008), it has been proved
\begin{proposition}\label{propGeneralOne}
Let $x(t)>0$, $t\in \mathbb{R}$ be a differentiable scalar function
that satisfies
  \begin{eqnarray}
    \dot{x}(t)\le -a(t)x(t)+b(t)\sup_{q(t)\le s\le t}x(s),&t\ge
    t_{0},\\
x(t)=\psi(t),&t\le t_{0},
  \end{eqnarray}
  where $\psi(t)>0$ is bounded and continuous for $t\le t_{0}$, $a(t)$, $b(t)\ge 0$
  for $t\ge t_{0}$, $0<q(t)\le t$ and $q(t)\to \infty$ as
  $t\to\infty$.
If there exists $\sigma>0$ such that
  \begin{align}\label{eqnConditionOld}
    -a(t)+b(t)\le -\sigma<0, t\ge t_{0},
  \end{align}
  then (i) $x(t)\le \sup_{-\infty< s\le t_{0}}|\psi(s)|$, (ii) $x(t)\to 0$
  as $t\to\infty$.
\end{proposition}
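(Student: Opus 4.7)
The plan is to prove (i) by a standard first-crossing argument and then leverage it to obtain (ii) via stabilization of the running supremum of $x$.

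For (i), set $M = \sup_{s \le t_0} |\psi(s)|$ and suppose for contradiction that $x(t) > M$ for some $t > t_0$. Let $t_1 = \inf\{t > t_0 : x(t) > M\}$; by continuity $x(t_1) = M$, and $x(s) \le M$ for every $s \in [q(t_1), t_1]$: for $s \le t_0$ by the definition of $M$, and for $s \in (t_0, t_1)$ by the minimality of $t_1$. Hence $\sup_{q(t_1) \le s \le t_1} x(s) \le M$, and substituting into the differential inequality gives $\dot{x}(t_1) \le (b(t_1) - a(t_1))M \le -\sigma M < 0$. On the other hand, the definition of $t_1$ supplies a sequence $s_n \downarrow t_1$ with $x(s_n) > M = x(t_1)$, forcing $\dot{x}(t_1) \ge 0$, a contradiction.

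For (ii), I would introduce $\bar{x}(t) = \sup_{t_0 \le s \le t} x(s)$; this is nondecreasing, continuous, and bounded above by $M$ by (i), so $\bar{x}(t) \to L^* \in [0, M]$. The key claim is that $\bar{x}$ is eventually constant. Fix $T_*$ large enough that $q(s) \ge t_0$ for every $s \ge T_*$. If $\bar{x}$ were strictly increasing in every right-neighborhood of some $t_2 \ge T_*$, continuity would force $x(t_2) = \bar{x}(t_2)$ together with $s_n \downarrow t_2$ satisfying $x(s_n) > x(t_2)$, giving $\dot{x}(t_2) \ge 0$. But $[q(t_2), t_2] \subseteq [t_0, t_2]$ and $\bar{x}(t_2) = x(t_2)$ yield $\sup_{q(t_2) \le s \le t_2} x(s) \le x(t_2)$, so the inequality produces $\dot{x}(t_2) \le (b(t_2) - a(t_2))x(t_2) \le -\sigma x(t_2) < 0$. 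This contradiction shows $\bar{x}(t) \equiv L^*$ for $t \ge T^\sharp$ for some $T^\sharp$, and the same reasoning gives $x(t) < L^*$ strictly for $t > T^\sharp$.

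To conclude $L^* = 0$, I would iterate: shifting the initial time past $T^\sharp$, the supremum of $x$ on any closed subinterval of $(T^\sharp, \infty)$ is attained at a point where $x < L^*$, so it is strictly less than $L^*$; rerunning the stabilization argument yields $L^{(k+1)} < L^{(k)}$. The sequence decreases to some $L_\infty \ge 0$, and I expect the main obstacle to be ruling out $L_\infty > 0$. The cleanest route, I believe, is to first strengthen the above to show that $x$ itself is eventually nonincreasing---by applying the same reasoning to the tail supremum $\sup_{s \ge T} x(s)$, which cannot be attained at an interior point $\tau > T$ without simultaneously giving $\dot{x}(\tau) = 0$ and $\dot{x}(\tau) \le -\sigma x(\tau) < 0$---whence the differential inequality reduces to $\dot{x}(t) \le -\sigma x(t) + b(t)(x(q(t)) - x(t))$ with $x(q(t)) - x(t) \to 0$ by convergence of $x$; a strictly positive $L_\infty$ would then force $x(t) \to -\infty$, contradicting $x > 0$. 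The delicate point is that this final integration must handle the possibly unbounded coefficient $b(t)$ multiplying a vanishing quantity, and it is precisely here that the full strength of the hypothesis $-a(t) + b(t) \le -\sigma$ (rather than merely $b(t) < a(t)$) is used.
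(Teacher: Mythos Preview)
The paper does not actually prove Proposition~2: it is quoted in the introduction as a known result of Baker \& Tang (1996) and Wen, Yu \& Wang (2008), purely as background motivating the paper's own Theorem~1. There is therefore no in-paper argument to compare your attempt against; I can only assess your proof on its own terms.

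Your argument for (i) is correct and is the standard first-crossing proof.

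Your sketch for (ii) has a genuine gap at the step where you conclude that $x$ is eventually nonincreasing. You argue that the tail supremum $\sup_{s\ge T}x(s)$ cannot be attained at an interior point $\tau>T$, since that would force both $\dot x(\tau)=0$ and $\dot x(\tau)\le-\sigma x(\tau)<0$. That much is fine, but it does not yield monotonicity: the supremum of a bounded continuous function on $[T,\infty)$ need not be attained at all (e.g.\ $1-1/t$), so ruling out interior maximizers does not give $x(T)=\sup_{s\ge T}x(s)$ for every large $T$, which is what ``eventually nonincreasing'' requires. Without monotonicity the rewriting $\dot x(t)\le -\sigma x(t)+b(t)\bigl(x(q(t))-x(t)\bigr)$ is not available, and even if it were, you yourself flag---but do not resolve---that integrating a possibly unbounded $b(t)$ against the vanishing quantity $x(q(t))-x(t)$ is delicate; the hypothesis $-a(t)+b(t)\le-\sigma$ bounds the difference $b-a$, not $b$ itself, so it is not clear how it ``closes'' this step.

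Your earlier chain (stabilization of the running supremum at $L^{*}$, then $x(t)<L^{*}$ strictly past some time, then iteration to a decreasing sequence $L^{(k)}$) is sound as far as it goes, but as you note it only gives $L^{(k)}\downarrow L_\infty\ge 0$ with no mechanism to force $L_\infty=0$. The usual way to finish is to work directly with $L=\limsup_{t\to\infty}x(t)$: once $t$ is large enough that $\sup_{q(t)\le s\le t}x(s)\le L+\epsilon$, compare $x$ with the solution of $\dot y=-a(t)y+b(t)(L+\epsilon)$ via variation of constants, and exploit $b\le a-\sigma$ together with $a\ge\sigma$ to produce a contraction $L\le\rho(L+\epsilon)$ with $\rho<1$. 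That computation is what you are missing, and it is where some care (and, in the cited references, an implicit control on the growth of $a$) is actually exercised.
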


In Mohamad \& Gopalsamy (2000), the authors consider continuous and
discrete time Halanay-type inequalities and further generalize the results
of Baker \& Tang (1996) to the case of distributed delays.
\begin{proposition}(Theorem 2.2 of Mohamad \& Gopalsamy, 2000)
Let $x(t)$, $t\in\mathbb{R}$ be a nonnegative function that
satisfies
\begin{eqnarray*}
\dot{x}(t)\le -a(t)x(t)+b(t)\int_{0}^{\infty}K(s)x(t-s)ds,& t>t_{0},\\
x(t)=|\varphi(t)| & t\le t_{0},
\end{eqnarray*}
where $\varphi(s)$, $s\in (-\infty,t_{0}]$, $a(t)$ and $b(t)$, $t\in
\mathbb{R}$ are nonnegative continuous and bounded functions; the
delay kernel $K(\cdot) : [0,\infty)\to [0,\infty)$ satisfies
\begin{eqnarray*}
\int_{0}^{\infty}K(s)e^{\alpha s}ds<\infty,
\end{eqnarray*}
for some positive number $\alpha$. Suppose further that
\begin{eqnarray}\label{eqnConditionDistributedDelay}
a(t)-b(t)\int_{0}^{\infty}K(s)ds\ge \sigma,~~t\in \mathbb{R},
\end{eqnarray}
where $\sigma=\inf_{t\in
\mathbb{R}}[a(t)-b(t)\int_{0}^{\infty}K(s)ds]>0$. Then there exists
a positive number $\tilde{\alpha}$ such that
\begin{eqnarray*}
x(t)\le \Big(\sup_{s\le
t_{0}}x(s)\Big)e^{-\tilde{\alpha}(t-t_{0})},~~t>t_{0}.
\end{eqnarray*}
\end{proposition}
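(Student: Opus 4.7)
The plan is to construct an exponentially decaying supersolution and compare by a first-touching argument. Write $M = \sup_{s \leq t_0} x(s)$ (finite by boundedness of $\varphi$), $B = \sup_t b(t) < \infty$, and $I(\lambda) = \int_0^\infty K(s) e^{\lambda s}\,ds$, which is well-defined for $\lambda \in [0,\alpha]$ by the kernel hypothesis.

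The first task is to select the decay rate. I would show that there exists $\tilde\alpha \in (0, \alpha]$ with
\[
B\bigl(I(\tilde\alpha) - I(0)\bigr) + \tilde\alpha < \sigma,
\]
which, combined with the standing hypothesis $-a(t) + b(t)I(0) \leq -\sigma$, gives $-a(t) + b(t) I(\tilde\alpha) < -\tilde\alpha$ uniformly in $t$. At $\tilde\alpha = 0$ the left side above equals $0 < \sigma$, and continuity of $\lambda \mapsto I(\lambda)$ on $[0,\alpha]$ follows from the dominated convergence theorem with the integrable majorant $K(s) e^{\alpha s}$; hence such a $\tilde\alpha$ exists.

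With $\tilde\alpha$ fixed, pick any $\varepsilon > 0$ and set $v(t) = (M + \varepsilon)\, e^{-\tilde\alpha(t - t_0)}$. I claim $x(t) < v(t)$ for every $t \in \mathbb{R}$. For $t \leq t_0$ this is immediate because $e^{-\tilde\alpha(t - t_0)} \geq 1$ and $x(t) \leq M < M + \varepsilon$. If the claim failed for some $t > t_0$, set $t^{*} = \inf\{t > t_0 : x(t) \geq v(t)\}$; continuity gives $t^{*} > t_0$, $x(t^{*}) = v(t^{*})$, and $x(t) \leq v(t)$ on $(-\infty, t^{*}]$, so in particular $x(t^{*}-s) \leq v(t^{*}-s) = v(t^{*}) e^{\tilde\alpha s}$ for every $s \geq 0$. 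Substituting into the differential inequality yields
\[
\dot x(t^{*}) \leq v(t^{*})\bigl[-a(t^{*}) + b(t^{*})\, I(\tilde\alpha)\bigr] < -\tilde\alpha\, v(t^{*}) = \dot v(t^{*}),
\]
which contradicts $\dot x(t^{*}) \geq \dot v(t^{*})$, coming from the fact that $v - x$ is nonnegative on a left neighborhood of $t^{*}$ and vanishes there. Sending $\varepsilon \downarrow 0$ delivers the stated bound $x(t) \leq M\, e^{-\tilde\alpha(t-t_0)}$.

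The one delicate point is the tuning in Step~1: the decay rate must be simultaneously strictly positive, bounded by the admissible exponent $\alpha$ (to keep $I(\tilde\alpha)$ finite), and small enough to leave a uniform strict gap $\tilde\alpha < a(t) - b(t) I(\tilde\alpha)$. The kernel assumption $\int_0^\infty K(s) e^{\alpha s}\,ds < \infty$ is precisely what powers the dominated-convergence continuity that makes this tuning possible; the remainder of the argument is a standard Halanay-type comparison.
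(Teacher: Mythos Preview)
The paper does not supply its own proof of this proposition; it is quoted in the introduction as background material, attributed to Mohamad \& Gopalsamy (2000), and left unproved. Your argument is the standard Halanay-type comparison and is correct: dominated convergence with majorant $K(s)e^{\alpha s}$ gives continuity of $\lambda\mapsto I(\lambda)$ on $[0,\alpha]$, so a small $\tilde\alpha>0$ with $B\bigl(I(\tilde\alpha)-I(0)\bigr)+\tilde\alpha<\sigma$ exists, yielding $-a(t)+b(t)I(\tilde\alpha)<-\tilde\alpha$ uniformly; the first-touching comparison against $v(t)=(M+\varepsilon)e^{-\tilde\alpha(t-t_0)}$ then produces the contradiction $\dot x(t^{*})<\dot v(t^{*})\le\dot x(t^{*})$ exactly as you state, and letting $\varepsilon\downarrow 0$ finishes. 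There is nothing in the paper to contrast against.
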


In Baker (2010), the author made some refinement on the decay rate of their
pervious works.

Generalized Halanay inequalities have also been developed in the
stability analysis of delay differential systems. For example, in
Chen (2001), Lu \& Chen (2004), the authors proposed some variants
of the Halanay inequality to solve the global stability of delayed
Hopfield neural networks.

Particularly, in Chen \& Lu (2003), Lu \& Chen (2004), the following
periodic and almost periodic integro-differential systems
\begin{eqnarray}
&&\frac{du_{i}(t)}{dt}=-d_{i}(t)u_{i}(t)+\sum_{j=1}^{n}a_{ij}(t)g_j(u_j(t))
+\sum_{j=1}^{n}\int_{0}^{\infty}f_{j}(u_{j}(t-\tau_{ij}(t)-s))d_{s}K_{ij}(t,s)+I_i(t),
\nonumber\\&&i=1,2,\ldots,n,\label{nnge}
\end{eqnarray}
where $d_{s}K_{ij}(t,s)$ are Lebesgue-Stieltjes measures for each
$t$, are discussed.

As a direct consequence of the main Theorem in Lu \& Chen (2004), we
have

\begin{proposition} {\it Suppose that $|g_{j}(x)|\le G_{j}|x|$ and $|f_{j}(x)|\le
F_{j}|x|$. If there exist positive constants $\xi_{1},\xi_{2},
\cdots,\xi_{n}$, $\alpha$ such that for all $t>0$ and
$i=1,2,\cdots,n$,
\begin{eqnarray}
-\xi_{i}(d_{i}(t)-\alpha)+\sum\limits_{j=1
}^{n}\xi_{j}G_{j}|a_{ij}(t)|
+\sum\limits_{j=1}^{n}\xi_{j}F_{j}e^{\alpha\tau_{ij}(t)}
\int_{0}^{\infty}e^{\alpha s}|d_{s}K _{ij}(t,s)|\le 0,
\end{eqnarray}
then for any solution $u(t)=[u_{1}(t),\cdots,u_{n}(t)]$, $t>0$ of
the system (\ref{nnge}) with $I_{i}(t)=0$, $i=1,\cdot,n$, we have
\begin{eqnarray}
\max_{i=1,\cdots,n}|u_{i}(t)|\le \max_{i=1,\cdots,n}\max_{-\tau\le
s\le 0}(e^{\alpha s }|u_{i}(s)|)e^{-\alpha t}.
\end{eqnarray}}
\end{proposition}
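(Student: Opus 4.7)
The plan is to run the standard weighted Dini-derivative contradiction argument that lies behind Halanay-type inequalities, with the weights $\xi_i$ absorbed into the definition of a scalar comparison function. Concretely, I would set
\begin{equation*}
v_i(t) = \xi_i^{-1} e^{\alpha t} |u_i(t)|, \qquad i=1,\dots,n,
\end{equation*}
so that the desired estimate becomes the boundedness of each $v_i(t)$ by its sup over the initial interval. Because $u_i$ may change sign, I work with the upper-right Dini derivative $D^+|u_i(t)|$; applying it to (\ref{nnge}) with $I_i\equiv 0$ and using $|g_j(x)|\le G_j|x|$ and $|f_j(x)|\le F_j|x|$ yields
\begin{equation*}
D^+|u_i(t)| \le -d_i(t)|u_i(t)| + \sum_{j=1}^n G_j |a_{ij}(t)|\,|u_j(t)| + \sum_{j=1}^n F_j\int_0^\infty |u_j(t-\tau_{ij}(t)-s)|\,|d_s K_{ij}(t,s)|.
\end{equation*}

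Multiplying through by $\xi_i^{-1} e^{\alpha t}$ and using the identity $e^{\alpha t}|u_j(t-\tau_{ij}(t)-s)| = e^{\alpha(\tau_{ij}(t)+s)} \xi_j v_j(t-\tau_{ij}(t)-s)$ produces
\begin{equation*}
D^+ v_i(t) \le \xi_i^{-1}\Big[-\xi_i(d_i(t)-\alpha) v_i(t) + \sum_{j} \xi_j G_j|a_{ij}(t)| v_j(t) + \sum_{j} \xi_j F_j e^{\alpha \tau_{ij}(t)}\!\int_0^\infty\! e^{\alpha s}|d_s K_{ij}(t,s)|\,v_j(t-\tau_{ij}(t)-s)\Big],
\end{equation*}
whose bracketed coefficients are precisely the expression that the hypothesis forces to be non-positive.

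Now set $V^\ast = \max_i \sup_{-\tau\le s\le 0} v_i(s)$, and assume for contradiction that some $v_k$ exceeds $V^\ast$ at a later time. By continuity, pick the first such time $t^\ast$ and a corresponding index $k$, so that $v_k(t^\ast)=V^\ast$, $v_j(s)\le V^\ast$ for all $j$ and $s\le t^\ast$, and $D^+ v_k(t^\ast)\ge 0$. Substituting these bounds into the inequality for $v_k$ and invoking the hypothesis at $(k,t^\ast)$ gives $D^+ v_k(t^\ast)\le 0$; promoting this to a strict contradiction by the usual $V^\ast\to V^\ast+\varepsilon$ perturbation and letting $\varepsilon\downarrow 0$ yields $v_i(t)\le V^\ast$ for all $t\ge 0$ and all $i$. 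Unwinding the definition of $v_i$ recovers $|u_i(t)|\le \xi_i V^\ast e^{-\alpha t}$ and, after absorbing the $\xi_i$ into the outer $\max_i$, the claimed bound.

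The main obstacle I anticipate is the careful treatment of the infinite Lebesgue--Stieltjes integral at the contradiction time $t^\ast$: one needs $\int_0^\infty e^{\alpha s}|d_s K_{ij}(t,s)|<\infty$ (which is implicit in the very statement of the hypothesis) in order to justify bounding the delayed integral term by $V^\ast\int_0^\infty e^{\alpha s}|d_s K_{ij}(t^\ast,s)|$ and to keep the contradiction argument from degenerating. The other technical point is ensuring that the inequality at the first crossing time is strict; the standard trick is to carry out the argument with $V^\ast+\varepsilon$ in place of $V^\ast$ so that the initial segment satisfies a strict inequality, and then let $\varepsilon\downarrow 0$.
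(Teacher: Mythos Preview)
The paper does not actually prove this proposition; it is quoted there as ``a direct consequence of the main Theorem in Lu \& Chen (2004)'', so there is no in-paper proof to compare against. Your weighted Dini-derivative contradiction argument is exactly the standard mechanism behind that cited result, and the computation you outline (passing to $v_i=\xi_i^{-1}e^{\alpha t}|u_i|$, bounding $D^+v_i$, and running the first-crossing argument with the $V^\ast+\varepsilon$ perturbation) is correct.

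One point deserves sharpening: your final sentence, ``after absorbing the $\xi_i$ into the outer $\max_i$, the claimed bound,'' does not literally deliver the inequality as printed. From $v_i(t)\le V^\ast$ you get $|u_i(t)|\le \xi_i V^\ast e^{-\alpha t}$ with $V^\ast=\max_j\sup_{s}\xi_j^{-1}e^{\alpha s}|u_j(s)|$, which after taking $\max_i$ yields the stated estimate only up to the factor $(\max_i\xi_i)/(\min_j\xi_j)$. The exact constant-free bound holds in the $\xi$-weighted norm $\max_i\xi_i^{-1}|u_i(t)|$, which is almost certainly the form in the original Lu--Chen theorem; the version printed in the present paper appears to have dropped the weights from the conclusion. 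This is a cosmetic discrepancy in the statement rather than a flaw in your method: either carry the harmless constant, or record the conclusion in the weighted norm.
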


In particular, when $n=1$, $d_{s}K _{11}(t,s)=b(t)\delta(s)$,
$\tau_{11}(t)=\tau(t)$, we have
\begin{proposition}  (also see Chen, 2001) {\it Suppose
$-(a(t)-\alpha)+|b(t)|e^{\alpha \tau(t)} \le 0$, then for any
continuous scalar function $x(t)\ge 0$ that satisfies
\begin{eqnarray}
\left\{\begin{array}{rcl}
\dot{x}(t)&\le& -a(t)x(t)+|b(t)|\sup_{s\ge 0}x(t-s),~~t>0,\\
x(t)&=&|\varphi(t)|, ~~ t\le 0,
\end{array}\right.
\end{eqnarray}
we have
\begin{eqnarray}
|x(t)|\le \max_{-\tau\le s\le 0}(e^{\alpha s }|\phi(s)|)e^{-\alpha
t}.
\end{eqnarray}}
\end{proposition}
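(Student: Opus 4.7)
The proposition is announced as the scalar single-delay reduction of the preceding one, so one short route is simply to verify that taking $n=1$, $\xi_{1}=1$, $G_{1}=F_{1}=1$, $d_{s}K_{11}(t,s)=b(t)\delta(s)$, and $\tau_{11}(t)=\tau(t)$ reduces the general hypothesis and conclusion verbatim to the ones stated here. I will instead sketch a self-contained argument using the standard exponential substitution, which makes the role of the weight $e^{\alpha\tau(t)}$ transparent.

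Put $y(t)=e^{\alpha t}x(t)$ and $M=\max_{-\tau\le s\le 0}\bigl(e^{\alpha s}|\varphi(s)|\bigr)$, so the target $x(t)\le Me^{-\alpha t}$ for $t>0$ is equivalent to $y(t)\le M$ for every $t>0$, a bound already available on $[-\tau,0]$ by the definition of $M$. Differentiating gives $\dot y(t)=\alpha y(t)+e^{\alpha t}\dot x(t)$, so the hypothesis on $\dot x$ rewrites as
\begin{eqnarray*}
\dot y(t)\;\le\;-(a(t)-\alpha)\,y(t)+|b(t)|\,e^{\alpha t}\sup_{s\ge 0}x(t-s).
\end{eqnarray*}
Using the identity $e^{\alpha t}x(t-s)=e^{\alpha s}y(t-s)$ together with $e^{\alpha s}\le e^{\alpha\tau(t)}$ on the effective delay window $[0,\tau(t)]$, this yields, whenever past values obey $y(t-s)\le N$,
\begin{eqnarray*}
\dot y(t)\;\le\;-(a(t)-\alpha)\,y(t)+|b(t)|\,e^{\alpha\tau(t)}\,N.
\end{eqnarray*}

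The plan is then a first-exit argument. To secure a strict inequality at the crossing time I first reduce to a strict hypothesis by replacing $\alpha$ with $\alpha-\eta$ for small $\eta>0$; monotonicity in $\alpha$ gives $-(a(t)-(\alpha-\eta))+|b(t)|e^{(\alpha-\eta)\tau(t)}\le -\eta<0$, and the perturbed conclusion at rate $\alpha-\eta$ will yield the stated one upon sending $\eta\downarrow 0$ (with $M$ replaced by its $\alpha-\eta$ analogue, which converges to $M$). With the strict hypothesis in hand, suppose for contradiction $y(t_{0})>M$ at some $t_{0}>0$ and set $t^{*}=\inf\{t>0:y(t)>M\}$; continuity gives $y(t^{*})=M$ and $y\le M$ on $[-\tau,t^{*}]$, so taking $N=M$ in the estimate above produces
\begin{eqnarray*}
\dot y(t^{*})\;\le\;M\bigl[-(a(t^{*})-\alpha)+|b(t^{*})|e^{\alpha\tau(t^{*})}\bigr]\;\le\;-\eta M\;<\;0,
\end{eqnarray*}
which contradicts the sign $D^{+}y(t^{*})\ge 0$ forced by $t^{*}$ being a first-exit time. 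Hence $y(t)\le M$ throughout $[0,\infty)$, which is the claim.

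\textbf{Main obstacle.} The conceptual work is routine; two technical points deserve attention. First, the sup must be interpreted as effectively taken over the finite delay window $[0,\tau(t)]$ so that $e^{\alpha s}$ is controlled by $e^{\alpha\tau(t)}$, without which the exponential weight would spoil the estimate. Second, the passage from the non-strict hypothesis of the proposition to the strict inequality needed at the first-exit time, handled by the $\eta$-perturbation above exploiting monotonicity of the hypothesis in $\alpha$.
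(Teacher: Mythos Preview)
Your proposal is correct. The paper itself supplies no proof: the proposition is stated as the $n=1$, $d_{s}K_{11}(t,s)=b(t)\delta(s)$, $\tau_{11}(t)=\tau(t)$ specialization of the preceding proposition, exactly the first route you mention. Your additional self-contained argument via the substitution $y(t)=e^{\alpha t}x(t)$ and a first-exit contradiction is a standard and sound alternative; the $\eta$-perturbation to obtain a strict inequality at the crossing time is handled cleanly, and you have correctly flagged that the $\sup_{s\ge 0}$ in the statement must be read as a supremum over a bounded delay window $[0,\tau(t)]$ for the conclusion (and the weight $e^{\alpha\tau(t)}$) to make sense---an ambiguity present already in the paper's own formulation. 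Both approaches deliver the same estimate; yours has the advantage of being independent of the vector-valued result, while the paper's route is shorter but presupposes that machinery.
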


Instead, when $n=1$, $d_{s}K _{11}(t,s)=b(t)k(s)ds$,
$\tau_{11}(t)=0$, we have
\begin{proposition} {\it Suppose
$-(a(t)-\alpha)+b(t) \int_{0}^{\infty}e^{\alpha s}K(s)ds\le 0$, then
for any continuous scalar function $x(t)$ satisfying
\begin{eqnarray}
\left\{\begin{array}{rcl}
\dot{x}(t)&\le& -a(t)x(t)+b(t)\int_{0}^{\infty}K(s)x(t-s)ds,~~t>0,\\
x(s)&=&|\varphi(s)|,~~ t\le 0,
\end{array}\right.
\end{eqnarray}
we have
\begin{eqnarray}
|x(t)|\le \max_{-\tau\le s\le 0}(e^{\alpha s }|\phi(s)|)e^{-\alpha
t}.
\end{eqnarray}}
\end{proposition}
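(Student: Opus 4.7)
The cleanest route is to recognize that this statement is the specialization of the Lu \& Chen (2004) proposition above obtained by taking $n=1$, $G_1=0$, $F_1=1$, $\tau_{11}(t)\equiv 0$, and $d_sK_{11}(t,s)=b(t)K(s)\,ds$. With these choices the decay condition of that proposition reduces, after cancelling the positive factor $\xi_1$, to exactly the present hypothesis $-(a(t)-\alpha)+b(t)\int_0^\infty e^{\alpha s}K(s)\,ds\le 0$, and its conclusion yields the claimed exponential bound. So a one-line proof is to invoke the earlier proposition.

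For a self-contained derivation I would use the standard exponential change of variables together with a first-violation contradiction argument. Setting $y(t)=e^{\alpha t}x(t)$ turns the hypothesis into
\begin{align*}
\dot y(t)\le (\alpha-a(t))y(t)+b(t)\int_0^\infty K(s)e^{\alpha s}y(t-s)\,ds,\qquad t>0,
\end{align*}
with $y(s)=e^{\alpha s}|\varphi(s)|$ for $s\le 0$. Letting $M=\sup_{s\le 0}e^{\alpha s}|\varphi(s)|$, the desired conclusion is equivalent to $y(t)\le M$ for all $t\ge 0$.

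To rule out a tangential first crossing, I would introduce a small parameter $\delta>0$ and work with $y_\delta(t)=e^{(\alpha-\delta)t}x(t)$ instead. Since $e^{(\alpha-\delta)s}\le e^{\alpha s}$ for $s\ge 0$, the associated coefficient obeys $-(a(t)-\alpha+\delta)+b(t)\int_0^\infty e^{(\alpha-\delta)s}K(s)\,ds\le -\delta<0$. If $y_\delta(t^*)=M$ at some first crossing time $t^*>0$, then $y_\delta(s)\le M$ for all $s\le t^*$, and the differential inequality at $t^*$ gives
\begin{align*}
\dot y_\delta(t^*)\le M\Bigl[(\alpha-\delta-a(t^*))+b(t^*)\int_0^\infty e^{(\alpha-\delta)s}K(s)\,ds\Bigr]\le -\delta M<0,
\end{align*}
which contradicts $y_\delta$ being non-decreasing at $t^*$. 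Hence $y_\delta(t)\le M$ for all $t\ge 0$, i.e.\ $|x(t)|\le Me^{-(\alpha-\delta)t}$, and letting $\delta\to 0^+$ delivers the stated bound.

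\textbf{Main obstacle.} The delicate step is the tangency case where $y$ might touch the level $M$ with zero derivative; the $\delta$-perturbation is precisely what upgrades the weak hypothesis to a strict one and so prevents the first-crossing argument from stalling. Everything else (the change of variable and the limit $\delta\to 0$) is routine.
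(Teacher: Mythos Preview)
Your primary route—deriving the statement as the $n=1$, $\tau_{11}(t)\equiv 0$, $d_sK_{11}(t,s)=b(t)K(s)\,ds$ specialization of the Lu \& Chen proposition—is exactly what the paper does: it introduces the proposition with precisely that reduction and gives no independent proof.

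One technical wrinkle in your optional self-contained argument: for $s<0$ you have $y_\delta(s)=e^{(\alpha-\delta)s}|\varphi(s)|=e^{-\delta s}\cdot e^{\alpha s}|\varphi(s)|$, and since $e^{-\delta s}>1$ this can exceed $M=\sup_{s\le 0}e^{\alpha s}|\varphi(s)|$. At a first crossing $t^*>0$ the distributed-delay integral $\int_0^\infty K(s)e^{(\alpha-\delta)s}y_\delta(t^*-s)\,ds$ reaches back to negative times, so the bound $y_\delta(t^*-s)\le M$ is not available for $s>t^*$, and the displayed inequality for $\dot y_\delta(t^*)$ does not follow as written. The standard repair is to run the first-crossing argument against $M_\delta:=\sup_{s\le 0}y_\delta(s)$ instead of $M$, conclude $y_\delta(t)\le M_\delta$ for $t\ge 0$, and then let $\delta\downarrow 0$ (noting $M_\delta\downarrow M$ under the implicit boundedness of $\varphi$). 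This is routine, but the gap is real.
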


For more recent works, refer to Liu, Lu \& Chen (2011) and Gil'
(2013). In all the above mentioned works, there is a basic
requirement: \underline{$a(t)>b(t)$} for all $t$. This requirement
is not satisfied in many real systems. For example, it is well known
that a system switching among several subsystems can be stable even
not all the subsystems are stable. So it is necessary, if possible,
to further generalize the Halanay
inequality so that it can be used to more general cases. 

In this paper, we will first generalize the differential
inequalities with bounded time-varying delays under more relaxed
requirements, say, without $a(t)>b(t)$ for all $t$. Then, we provide
two applications of the theoretical results. First, we apply the
theoretical results to the analysis of self synchronization in
neural networks. Based on our new generalized Halanay inequality, we
proved new sufficient conditions for self synchronization in neural
networks with bounded time-varying delays. Then, we investigate
periodic solutions of neural networks with periodic coefficients and
time delays. Under more relaxed requirement, we proved new
sufficient conditions for the existence and exponential stability of
the periodic solutions of such neural networks.

The rest of the paper is organized as follows. In Section
\ref{secGeneralHalanay}, the new generalized Halanay inequality is
proposed and proved; two applications of the theoretical results are
given in Section \ref{secApplication}; Numerical examples with
simulations are given in Section \ref{secExample}; the paper is
concluded in Section \ref{secConclusion}.

\section{Generalized Halanay inequality}\label{secGeneralHalanay}
Consider a scalar function $x(t)$ governed by the inequality

\begin{eqnarray}\label{ineqnSystem1}\left\{
\begin{array}{rcl}
D^{+}|x(t)|&\le& -a(t)|x(t)|+b(t)\sup_{t-\tau_{\max}\le s\le
t}|x(t-s)|,~~t\ge 0,\\
x(s)&=&\phi(s), s\in [-\tau_{\max},0],
\end{array}\right.
\end{eqnarray}
where $D^{+}$ represents the upper right Dini derivative,
$a(\cdot)$: $\mathbb{R}^{+}\mapsto \mathbb{R}^{+}$, $b(\cdot)$:
$\mathbb{R}^{+}\mapsto \mathbb{R}$ are piecewise continuous and
uniformly bounded, i.e., there exists $M_{a}>0$, $M_{b}>0$ such that
$0<a(t)\le M_{a}$, $|b(t)|\le M_{b}$, $\phi(s)\ge 0$ is the initial
value, and $\tau(\cdot)$: $\mathbb{R}^{+}\mapsto (0,\tau_{\max}]$ is
the time-varying delay with $\tau_{\max}$ being the upper bound.

For a fixed $\eta>0$ and $0\le t_{1}<t_{2}<\infty$, denote the set
$S_{\eta}(t_{1},t_{2})=\{t\in (t_{1},t_{2}):~a(t)-|b(t)|>\eta\}$, and the
set $S_{-}(t_{1},t_{2})=\{t\in (t_{1},t_{2}):~a(t)<|b(t)|\}$,
$S_{+}(t_{1},t_{2})=(t_{1},t_{2})\backslash (S_{\eta}(t_{1},t_{2})\cup
S_{-}(t_{1},t_{2}))$. It is obvious that $S_{\eta}(t_{1},t_{2})$,
$S_{-}(t_{1},t_{2})$, $S_{+}(t_{1},t_{2})$ are composed of a series of
intervals, and $0\le a(t)-|b(t)|\le \eta$ on $S_{+}(t_{1},t_{2})$. Let
$\mu_{s}(t_{1},t_{2})=\mu(S_{s}(t_{1},t_{2}))$ be the Lebesgue measure of
the set $S_{s}(t_{1},t_{2})$, where $s=\eta$, '+', '-'. And the parameter
$\delta=1-\displaystyle\frac{\eta}{2M_{a}}\in (0,1)$ will also be used
later.

Before stating main results, we summarize the basic conditions into the
following

\begin{definition}[$\eta$-condition]\label{conditionBasic}
A function pair $\{a(\cdot),b(\cdot)\}$ with $0<a(\cdot)\le M_{a}$,
$|b(\cdot)|\le M_{b}$ is said to satisfy the {\bf $\eta$-condition},
if there exist $t_{0}\ge 0$, $0<C^{*}<\eta/2$ and an integer $N>0$
such that
\begin{enumerate}
\item[(i)]
\begin{eqnarray}\label{eqnConditionMaina}
\sum_{k=0}^{+\infty}\mu_{\eta}(t_{k},t_{k+1}^{-})=\infty;
\end{eqnarray}
\item[(ii)]
\begin{align}\label{eqnConditionMain}
\limsup_{k\to\infty}\frac{[e^{M_{b}\mu_{-}(t_{k},t_{k+1})}-1]
e^{M_{a}\mu_{+}(N+1)\tau_{\max}}}
{\min\{\frac{1}{M_{a}},\mu_{\eta}(t_{k},t_{k+1}^{-})\}}=C^{*},
\end{align}
\end{enumerate}
where $t_{k}=t_{0}+k(N+1)\tau_{\max}$, and
$t_{k}^{-}=t_{k}-\tau_{\max}$.
\end{definition}
\begin{remark}
If $0/0$ appears on the left-hand side of \eqref{eqnConditionMain},
then we explain it as $0$. Thus \eqref{eqnConditionMain} will always
hold when the numerator on its left-hand side is zero.
\end{remark}

Now, we state the main result which can be called a ``Generalized
Halanay Inequality".

\begin{theorem}\label{thmMain}For any given $a(\cdot)$, $b(\cdot)$ in \eqref{ineqnSystem1}, if there
exists $\eta>0$ such that $\{a(\cdot),b(\cdot)\}$ satisfies the
$\eta$-condition, then the zero solution of any system governed by
\eqref{ineqnSystem1} is asymptotically stable, i.e., from any
initial value $\phi(s)$, $s\in [-\tau_{\max},0]$, there exists $K>1$
such that the solution $x(t)$ satisfies
$$ |x(t)|\le K\max_{-\tau_{\max}\le s\le 0}|\phi(s)|,$$ for all $t\ge 0$, and
$x(t)\to 0$ as $t\to \infty$. Furthermore, if there exists
$\epsilon>0$ such that $\mu_{\eta}(t_{k},t_{k+1})\ge\epsilon$ for
each $k$, then the convergence is exponential, i.e., there exists
$\widetilde{K}>0$, $\alpha>0$ such that
\begin{eqnarray*}
|x(t)|\le \widetilde{K}\max_{-\tau_{\max}\le s\le
0}|\phi(s)|e^{-\alpha t}.
\end{eqnarray*}
\end{theorem}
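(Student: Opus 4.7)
The plan is to reduce the continuous inequality to a discrete contraction on the windowed envelope $W_k = \sup_{s\in[t_k-\tau_{\max},\,t_k]}|x(s)|$, where the windows have fixed length $(N{+}1)\tau_{\max}$. Concretely, I would aim to establish $W_{k+1}\le \rho_k W_k$ with $\rho_k\le 1-(\eta/2)\min\{1/M_a,\mu_\eta(t_k,t_{k+1}^-)\}$ for all sufficiently large $k$. Condition \eqref{eqnConditionMaina} then provides a divergent sum of contractions, forcing $W_k\to 0$, while the extra assumption $\mu_\eta(t_k,t_{k+1})\ge\epsilon$ turns $\rho_k$ into a uniform $\rho<1$ and, since the windows are of fixed length, yields exponential decay in $t$. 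The overall bound $|x(t)|\le K\sup|\phi|$ will come for free from an intra-window estimate that dominates $\sup_{[t_k,t_{k+1}]}|x|$ by a constant multiple of $W_k$.

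For the per-window estimate, I would start with the variation-of-constants form
\begin{eqnarray*}
|x(t)|\le |x(t_k)|e^{-\int_{t_k}^{t}a(u)\,du} + \int_{t_k}^{t}e^{-\int_{s}^{t}a(u)\,du}|b(s)|W(s)\,ds,
\end{eqnarray*}
where $W(s)=\sup_{u\in[s-\tau_{\max},s]}|x(u)|$. I would then split $(t_k,t_{k+1})$ by the three disjoint sets $S_-,S_+,S_\eta$ defined before the theorem. On $S_-$, using only $D^{+}|x|\le |b(t)|W(t)$, the envelope can inflate by at most a factor $e^{M_b\mu_-(t_k,t_{k+1})}$; on $S_+$, the narrow gap $0\le a-|b|\le \eta$ costs at most $e^{M_a\mu_+(t_k,t_{k+1})}$, which is absorbed into the factor already written in \eqref{eqnConditionMain}; on $S_\eta$, the classical Halanay-type observation that $D^{+}|x(t)|\le -\eta W(t)$ whenever $|x(t)|$ reaches the current envelope produces a \emph{linear drop} of order $\eta\min\{1/M_a,\mu_\eta(t_k,t_{k+1}^-)\}W_k$, the cutoff $1/M_a$ appearing because any single continuous drop at rate $\eta W_k$ can persist only for time $\sim 1/M_a$ before the prefactor $e^{-\int a}$ in the variation-of-constants bound alters the bookkeeping.

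Combining the three regimes gives a multiplicative contraction of schematic shape
\begin{eqnarray*}
W(t_{k+1}) \le W_k\,\Big[\,1 + \big(e^{M_b\mu_-(t_k,t_{k+1})}-1\big)e^{M_a\mu_+(N+1)\tau_{\max}} - \eta\,\min\{1/M_a,\mu_\eta(t_k,t_{k+1}^-)\}\,\Big].
\end{eqnarray*}
Condition \eqref{eqnConditionMain} is tuned precisely so that the positive contribution is eventually bounded by $C^{*}\min\{1/M_a,\mu_\eta\}$ with $C^{*}<\eta/2$; the bracket therefore lies below $1-(\eta-C^{*})\min\{1/M_a,\mu_\eta\}\le 1-(\eta/2)\min\{1/M_a,\mu_\eta\}$. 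Taking logarithms and summing, \eqref{eqnConditionMaina} drives $W_k\to 0$; adding the uniform lower bound $\mu_\eta\ge\epsilon$ gives a uniform contraction $\rho<1$, which upgrades the convergence to an exponential rate.

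The main obstacle is the $S_\eta$ estimate: because of the delay, the instantaneous drop $D^{+}|x|\le -\eta W$ does not translate into an immediate drop of $W$, since $W(t)$ still looks back $\tau_{\max}$ units and can recall pre-decay values. One must wait for the freshly decayed values to populate the whole lookback window before the envelope actually shrinks. This is exactly why the window length must be $(N{+}1)\tau_{\max}$ rather than $\tau_{\max}$, and why the truncated endpoint $t_{k+1}^{-}=t_{k+1}-\tau_{\max}$ appears in \eqref{eqnConditionMain}; a careful chaining argument across at most $N$ sub-intervals of length $\tau_{\max}$, using the parameter $\delta=1-\eta/(2M_a)$ to measure the fraction of an elementary $\tau_{\max}$-block of $S_\eta$ that survives the propagation, is where the bulk of the technical work lies.
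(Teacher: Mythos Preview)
Your strategy is essentially the paper's: both track the windowed envelope $M_0(t)=\sup_{[t-\tau_{\max},t]}|x|$, split each window $(t_k,t_{k+1})$ according to the three sets $S_-$, $S_+$, $S_\eta$, and extract a contractive factor per window from the $\eta$-condition. The paper packages the intra-window estimate as a single inductive lemma (Lemma~\ref{lemEstimationMain}) valid for an arbitrary interleaving of the three regimes, rather than chaining over $N$ sub-blocks of length $\tau_{\max}$, but the content is the same and the role of $t_{k+1}^-$ and of $\delta$ is exactly as you describe.

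Two corrections to your schematic. First, on $S_+$ the envelope $M_0$ is \emph{nonincreasing} (since $a\ge|b|$ there), so $S_+$ does not inflate anything; what it does is let $|x|$ drift back up toward $M_0$, eroding the gap created on $S_\eta$. The penalty therefore appears as a factor $e^{-M_a\mu_+}$ multiplying the \emph{negative} (drop) term, not as $e^{M_a\mu_+}$ on the positive one. The paper's actual per-window bound is
\[
M_0(t_{k+1})\le M_0(t_k)\Big[e^{M_b\mu_-(t_k,t_{k+1})}-\frac{\eta}{2}\min\Big\{\frac{1}{M_a},\mu_\eta(t_k,t_{k+1}^-)\Big\}\,e^{-M_a(N+1)\tau_{\max}}\Big],
\]
and condition~\eqref{eqnConditionMain} is calibrated to push \emph{this} bracket below $1-(\frac{\eta}{2}-C)\min\{\frac{1}{M_a},\mu_\eta\}e^{-M_a(N+1)\tau_{\max}}$. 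Second, the drop coefficient is $\eta/2$, not $\eta$: on $S_\eta$ one only has $D^+|x|\le -\frac{\eta}{2}\widetilde M$ once $|x|\ge\delta\widetilde M$, which is where $\delta=1-\eta/(2M_a)$ enters and why the guaranteed linear drop is $\frac{\eta}{2}\mu_\eta$ (with the $1/M_a$ cutoff coming from $1-\delta=\frac{\eta}{2M_a}$). With these two fixes your outline coincides with the paper's proof.
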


As a direct consequence of Theorem \ref{thmMain}, we have
\begin{corollary}\label{corMain}
If $\mu_{-}(0,+\infty)=0$ and there exists $\eta>0$ such that
$\mu_{\eta}(0,+\infty)=+\infty$, then the zero solution of any
system governed by \eqref{ineqnSystem1} is asymptotically stable.
\end{corollary}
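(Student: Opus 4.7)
The plan is to derive Corollary~\ref{corMain} from Theorem~\ref{thmMain} by checking, under the two hypotheses $\mu_{-}(0,+\infty)=0$ and $\mu_{\eta}(0,+\infty)=+\infty$, that the pair $\{a(\cdot),b(\cdot)\}$ satisfies the $\eta$-condition of Definition~\ref{conditionBasic} for a judicious choice of $t_{0}$ and $N$. Condition (ii) turns out to be automatic, and condition (i) requires only a small shift-invariance observation.

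First I would dispose of condition (ii). Because $\mu_{-}(0,+\infty)=0$, every sub-interval $(t_{k},t_{k+1})$ satisfies $\mu_{-}(t_{k},t_{k+1})=0$, so the factor $e^{M_{b}\mu_{-}(t_{k},t_{k+1})}-1$ vanishes for every $k$. The numerator in \eqref{eqnConditionMain} is therefore identically zero, and the remark following Definition~\ref{conditionBasic} declares \eqref{eqnConditionMain} satisfied in that case (the expression is interpreted as $0$, which lies below any admissible positive $C^{*}<\eta/2$).

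For condition (i) I would fix $N=1$, so that $(N+1)\tau_{\max}=2\tau_{\max}$ and for each $t_{0}\ge 0$ the two families $(t_{k},t_{k+1}^{-})$ and $(t_{k+1}^{-},t_{k+1})$ consist of alternating open intervals of common length $\tau_{\max}$ whose union covers $(t_{0},+\infty)$. Summing Lebesgue measures of $S_{\eta}$ over both families gives
\begin{equation*}
\sum_{k=0}^{\infty}\mu_{\eta}(t_{k},t_{k+1}^{-})+\sum_{k=0}^{\infty}\mu_{\eta}(t_{k+1}^{-},t_{k+1})=\mu_{\eta}(t_{0},+\infty)=+\infty,
\end{equation*}
since removing the bounded initial segment $(0,t_{0})$ from $(0,+\infty)$ cannot reduce an infinite measure to a finite one. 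Hence at least one of the two series diverges. The key observation is that replacing $t_{0}$ by $t_{0}+\tau_{\max}$ translates the whole partition by exactly one sub-interval, so the ``good'' blocks $(t_{k},t_{k+1}^{-})$ and the ``gap'' blocks $(t_{k+1}^{-},t_{k+1})$ interchange their roles. Consequently, by choosing $t_{0}\in\{0,\tau_{\max}\}$ appropriately, one always secures $\sum_{k=0}^{\infty}\mu_{\eta}(t_{k},t_{k+1}^{-})=+\infty$, which is \eqref{eqnConditionMaina}.

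With both parts of the $\eta$-condition in hand, Theorem~\ref{thmMain} applies directly and delivers the asymptotic stability claimed in the corollary. The main (and only) subtle point is that a naive pigeonhole across $t_{0}\in[0,(N+1)\tau_{\max})$ does not by itself force some $t_{0}$ to produce an infinite partial sum; fixing $N=1$ makes the partition self-dual under a shift by $\tau_{\max}$, converting what could be a delicate averaging argument into a clean binary choice between $t_{0}=0$ and $t_{0}=\tau_{\max}$.
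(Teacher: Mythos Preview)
Your proposal is correct and follows exactly the route the paper intends: the authors state the corollary ``as a direct consequence of Theorem~\ref{thmMain}'' and give no further argument, so your verification of the $\eta$-condition is precisely the content that must be supplied. Your handling of condition~(ii) via the remark on $0/0$ is the obvious move; your treatment of condition~(i) is more careful than anything the paper makes explicit---the shift-by-$\tau_{\max}$ trick with $N=1$ that swaps the ``good'' and ``gap'' blocks is a clean way to guarantee \eqref{eqnConditionMaina} without any averaging argument, and this subtlety is simply not addressed in the paper.
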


\begin{remark}
In previous works, it was always assumed that for all $t\ge 0$,
$a(t)-|b(t)|>\eta$. Theorem \ref{thmMain} indicates that even the condition
$a(t)-|b(t)|>\eta$ is not satisfied for some $t>0$, the zero solution of
any system governed by \eqref{ineqnSystem1} can still be asymptotically
stable if only (\ref{eqnConditionMaina}) and (\ref{eqnConditionMain}) are
satisfied. Corollary 1 indicates that in case $a(t)-|b(t)|\ge 0$, only if
the measure of the set satisfying $a(t)-|b(t)|\ge \eta$ is infinite, then
the ``0" is a stable equilibrium point.

\end{remark}

In the proof of Theorem \ref{thmMain}, instead of proving $|x(t)|\to
0$, we prove the following maximal function
\begin{align}
M_{0}(t)=\sup_{t-\tau_{\max}\le s\le t}|x(s)| \end{align} tends to
zero as $t\to \infty$. The basic idea for the proof is to establish
a uniform estimation for $M_{0}(t)$ on an interval $(t_{1},t_{2})$
when $S_{+}(t_{1},t_{2})$, $S_{-}(t_{1},t_{2})$ and
$S_{\eta}(t_{1},t_{2})$ coexist. This will be proved by induction.

Before proving Theorem \ref{thmMain}, we need to make some
preparations by establishing some lammas.

\begin{lemma}\label{lemNonincreaseEstimation1}
$M_{0}(t)$ is nonincreasing on the set $S_{+}(0,+\infty)$ as well as
on the set $S_{\eta}(0,+\infty)$ for any given $\eta>0$.
\end{lemma}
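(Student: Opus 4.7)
The plan is to show that the upper-right Dini derivative $D^+ M_0(t) \le 0$ at every point $t \in S_+(0,+\infty) \cup S_\eta(0,+\infty)$. Since $x$ (and hence $M_0$) is continuous, the standard fact that a continuous function with nonpositive upper-right Dini derivative on an interval is nonincreasing on that interval will then give the lemma: $M_0$ is nonincreasing on each connected interval of $S_+(0,+\infty)$ and of $S_\eta(0,+\infty)$. The argument naturally splits according to where the supremum defining $M_0(t)$ is attained.

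In the easy case $|x(t)| < M_0(t)$, the supremum in $M_0(t)$ is attained at some $s^* \in [t-\tau_{\max}, t)$. By continuity of $|x|$, one has $|x(s)| < M_0(t)$ for every $s$ in a right neighborhood of $t$, so new values cannot raise the supremum; as long as $s^*$ stays inside the sliding window $[t+h-\tau_{\max}, t+h]$ the old maximum still contributes and $M_0(t+h)=M_0(t)$, while once it leaves one only gets $M_0(t+h) \le M_0(t)$. In either subcase $D^+ M_0(t) \le 0$.

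The real work is the case $|x(t)| = M_0(t) =: M$. On $S_+(0,+\infty) \cup S_\eta(0,+\infty)$ one has $a(t) \ge |b(t)|$, so substituting $|x(t)| = M_0(t) = M$ into \eqref{ineqnSystem1} gives
\begin{equation*}
D^+ |x(t)| \le -a(t) M + |b(t)| M = -(a(t) - |b(t)|) M \le 0.
\end{equation*}
Unwinding the definition of $\limsup$, for every $\epsilon > 0$ there exists $\delta_\epsilon > 0$ such that $(|x(t+h)| - M)/h \le \epsilon$, and hence $|x(t+h)| \le M + \epsilon h$, for every $h \in (0, \delta_\epsilon]$. Applying the same bound to each $s \in (t, t+h]$ (which all satisfy $s-t \in (0, \delta_\epsilon]$) yields $\sup_{s \in (t, t+h]} |x(s)| \le M + \epsilon h$. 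Combined with $\sup_{s \in [t+h-\tau_{\max}, t]} |x(s)| \le M_0(t) = M$, this gives $M_0(t+h) \le M + \epsilon h$, so $D^+ M_0(t) \le \epsilon$; since $\epsilon > 0$ was arbitrary, $D^+ M_0(t) \le 0$.

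The main technical subtlety lies in the last paragraph: the bound $D^+ |x(t)| \le 0$ is only a $\limsup$ statement at the single point $t$, yet to control the sliding supremum in $M_0(t+h)$ I need a uniform estimate valid for all $s \in (t, t+h]$. Unpacking the $\limsup$ as above is exactly what supplies this uniform estimate, and once it is in hand the monotonicity conclusion follows at once.
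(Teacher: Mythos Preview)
Your argument is correct and follows the same two-case split as the paper (whether or not $|x(t)|=M_0(t)$, combined with the same differential-inequality computation in the second case). The paper's proof is terser: in the case $|x(t_1)|=M_0(t_1)$ it simply computes $D^+|x(t)|_{t=t_1}\le 0$ and asserts without further justification that ``this also implies that $M_0(t)$ is nonincreasing at $t_1$,'' whereas you spell out the passage from $D^+|x(t)|\le 0$ to $D^+M_0(t)\le 0$ by unpacking the $\limsup$ to get the uniform bound $|x(s)|\le M+\epsilon h$ on $(t,t+h]$. Your version is thus a more detailed rendering of exactly the same idea.
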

\begin{proof}
Given $t_{1}\in S_{+}(0,+\infty)\cup S_{\eta}(0,+\infty)$, then
$a(t_{1})\ge |b(t_{1})|$. If $|x(t_{1})|<M_{0}(t_{1})$, then from
the continuity of $x(t)$, there exists $t_{2}>t_{1}$ such that
$|x(t)|\le M_{0}(t_{1})$ on $[t_{1},t_{2}]$, which implies that
$M_{0}(t)$ is nonincreasing at $t_{1}$. Otherwise,
$|x(t_{1})|=M_{0}(t_{1})$. We have:
\begin{align*}
\Big\{D^{+}|x(t)|\Big\}_{t=t_{1}}\le
-a(t_{1})|x(t_{1})|+|b(t_{1})|M_{0}(t_{1})
\le-[a(t_{1})-b(t_{1})]|x(t_{1})| \le 0.
\end{align*}
This also implies that $M_{0}(t)$ is nonincreasing at $t_{1}$. The
proof is completed.
\end{proof}

\begin{lemma}\label{lemIncreasingEstimation}
Given any $t_{1}<t_{2}$, we have
\begin{align*}
M_{0}(t_{2})\le M_{0}(t_{1})e^{M_{b}\mu_{-}(t_{1},t_{2})}.
\end{align*}
\end{lemma}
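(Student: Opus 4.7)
The plan is to exhibit a pointwise upper bound on the upper Dini derivative of $M_0$ of the form $D^{+}M_0(t)\le M_b\,\mathbf{1}_{S_{-}(t_1,t_2)}(t)\,M_0(t)$, and then to integrate this differential inequality via a standard Gronwall-type comparison. The motivation is simple: Lemma \ref{lemNonincreaseEstimation1} already tells us $M_0$ cannot grow on $S_{+}\cup S_{\eta}$, so all growth of $M_0$ on $(t_1,t_2)$ must come from the subset $S_{-}(t_1,t_2)$, and on that set the dissipation term $-a(t)|x(t)|$ is too weak to beat $|b(t)|\sup|x|$, so one must settle for the crude bound $|b(t)|\le M_b$.

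First I would deal with the easy regime. By Lemma \ref{lemNonincreaseEstimation1}, for every $t\in(t_1,t_2)\cap\bigl(S_{+}(0,\infty)\cup S_{\eta}(0,\infty)\bigr)$ the function $M_0$ is nonincreasing at $t$, hence $D^{+}M_0(t)\le 0$ there.

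Next I would treat $t\in S_{-}(t_1,t_2)$ by splitting into two subcases. If $|x(t)|<M_0(t)$, continuity of $x$ yields an $h_0>0$ with $|x(s)|\le M_0(t)$ on $[t,t+h_0]$; combining this with the fact that $M_0(t+h)$ is a supremum over a window that loses points on the left and only gains points on which $|x|\le M_0(t)$, one obtains $M_0(t+h)\le M_0(t)$ for small $h>0$, so again $D^{+}M_0(t)\le 0$. If instead $|x(t)|=M_0(t)$, the governing inequality \eqref{ineqnSystem1} gives
\begin{equation*}
D^{+}|x(t)|\le -a(t)|x(t)|+|b(t)|M_0(t)=\bigl(|b(t)|-a(t)\bigr)M_0(t)\le M_b M_0(t).
\end{equation*}
A short continuity argument (when $|x(t)|=M_0(t)$, any $M_0(t+h)>M_0(t)$ must be realised at a point $s^{*}\in(t,t+h]$, so $M_0(t+h)-M_0(t)\le \sup_{s\in[t,t+h]}|x(s)|-|x(t)|$) then yields $D^{+}M_0(t)\le \max\{0,D^{+}|x(t)|\}\le M_b M_0(t)$.

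Putting the two regimes together gives, for every $t\in(t_1,t_2)$,
\begin{equation*}
D^{+}M_0(t)\le M_b\,\mathbf{1}_{S_{-}(t_1,t_2)}(t)\,M_0(t).
\end{equation*}
Since $M_0$ is continuous and the right-hand side is locally integrable, a standard Gronwall/comparison argument for Dini derivatives yields
\begin{equation*}
M_0(t_2)\le M_0(t_1)\exp\!\Bigl(\int_{t_1}^{t_2}M_b\,\mathbf{1}_{S_{-}(t_1,t_2)}(s)\,ds\Bigr)=M_0(t_1)e^{M_b\mu_{-}(t_1,t_2)},
\end{equation*}
as required. The main obstacle I expect is the case $|x(t)|=M_0(t)$ in $S_{-}$: the identity $D^{+}M_0(t)=D^{+}|x(t)|$ is not immediate because $M_0$ is a running supremum rather than $|x|$ itself, so one has to compare $M_0(t+h)-M_0(t)$ with $\sup_{s\in[t,t+h]}|x(s)|-|x(t)|$ by hand; everything else is bookkeeping and a routine application of Gronwall.
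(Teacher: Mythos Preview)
Your proof is correct and follows essentially the same idea as the paper's: use Lemma~\ref{lemNonincreaseEstimation1} to see that $M_0$ can grow only on $S_{-}(t_1,t_2)$, and there bound $D^{+}M_0(t)\le M_b M_0(t)$, then integrate. The only difference is packaging: the paper writes $S_{-}(t_1,t_2)=\bigcup_{i=1}^{s}(\underline t_i,\overline t_i)$ as a finite union of intervals, applies the differential inequality on each one to get $M_0(\overline t_i)\le M_0(\underline t_i)e^{M_b(\overline t_i-\underline t_i)}$, and chains these together, whereas you establish the single pointwise bound $D^{+}M_0(t)\le M_b\,\mathbf{1}_{S_{-}}(t)\,M_0(t)$ on all of $(t_1,t_2)$ and invoke Gronwall once; your route avoids the implicit finiteness assumption on the decomposition of $S_{-}$ and is arguably cleaner, but the underlying mechanism is identical.
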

\begin{proof}
Let $t_{1}\le \underline{t}_{1}<\overline{t}_{1}<\underline{t}_{2}
<\overline{t}_{2}<\cdots<\underline{t}_{s}<\overline{t}_{s}\le
t_{2}$ such that
$S_{-}(t_{1},t_{2})=\bigcup_{i=1}^{s}(\underline{t}_{i},\overline{t}_{i})$.
From Lemma \ref{lemNonincreaseEstimation1}, we see that $M_{0}(t)$
can increase only on $S_{-}(t_{1},t_{2})$. Thus,
$M_{0}(\underline{t}_{1})\le M_{0}(t_{1})$. On the other hand, if
$M_{0}(t)$ is increasing at $t^{*}\in
(\underline{t}_{1},\overline{t}_{1})$,
\begin{align*}
\Big\{D^{+}M_{0}(t)\Big\}_{t=t^{*}}=\Big\{D^{+}|x(t)|\Big\}_{t=t^{*}}
\le -a(t)|x(t)|+|b(t)|M_{0}(t)\le M_{b}M_{0}(t).
\end{align*}
This implies that
\begin{eqnarray*}
M_{0}(\overline{t}_{1})\le M_{0}(\underline{t}_{1})e^{M_{b}(\overline{t}_{1}-\underline{t}_{1})}
\le M_{0}(t_{1})e^{M_{b}(\overline{t}_{1}-\underline{t}_{1})}.
\end{eqnarray*}
Similarly, we have
\begin{align*}
M_{0}(\overline{t}_{2})\le
M_{0}(\underline{t}_{2})e^{M_{b}(\overline{t}_{2}-\underline{t}_{2})}
\le
M_{0}(\overline{t}_{1})e^{M_{b}(\overline{t}_{2}-\underline{t}_{2})}
\le
M_{0}(t_{1})e^{M_{b}[(\overline{t}_{1}-\underline{t}_{1})+(\overline{t}_{2}-\underline{t}_{2})]}.
\end{align*}
Repeating this process, finally we can have:
\begin{align*}
M_{0}(t_{2})\le M_{0}(\overline{t}_{s})\le
M_{0}(t_{1})e^{M_{b}\sum_{i=1}^{s}(\overline{t}_{i}-\underline{t}_{i})}
=M_{0}(t_{1})e^{M_{b}\mu_{-}(t_{1},t_{2})}.
\end{align*}
The proof is completed.
\end{proof}

\begin{lemma}\label{lemEstimationMain1}
For any $t_{1}< t_{2}$,
\begin{enumerate}
\item
if $(t_{1},t_{2})=S_{+}(t_{1},t_{2})$, then
\begin{eqnarray}
|x(t_{2})|\le  M_{0}(t_{1})-[M_{0}(t_{1})-|x(t_{1})|]e^{-M_{a}(t_{2}-t_{1})};
\end{eqnarray}

\item
if $(t_{1},t_{2})=S_{\eta}(t_{1},t_{2})$, then for any
$\widetilde{M}\ge M_{0}(t_{1})$,
\begin{align}
|x(t_{2})|&\le \max\{\delta \widetilde{M},|x(t_{1})|
-\frac{\eta}{2}\mu(S_{\eta}(t_{1},t_{2}))\widetilde{M}\};
\end{align}

\item
if $(t_{1},t_{2})=S_{-}(t_{1},t_{2})$, then
\begin{align*}
|x(t_{2})| \le&|x(t_{1})|+M_{0}(t_{1})[e^{M_{b}(t_{2}-t_{1})}-1].
\end{align*}
\end{enumerate}
\end{lemma}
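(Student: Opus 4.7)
The plan is to handle the three cases separately, in each case replacing the supremum $M_0(\cdot)$ appearing on the right-hand side of \eqref{ineqnSystem1} by an explicit upper bound (constant, or exponential, or $\widetilde{M}$) and then integrating the resulting scalar Dini inequality.

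\textbf{Case 1 ($S_+$).} Here $|b(t)|\le a(t)\le M_a$ on $(t_1,t_2)$, and by Lemma~\ref{lemNonincreaseEstimation1} $M_0(t)$ is nonincreasing on $(t_1,t_2)$, so $M_0(t)\le M_0(t_1)$. Introducing the auxiliary function $y(t):=M_0(t_1)-|x(t)|\ge 0$, I would bound
\begin{align*}
D^+ y(t) = -D^+|x(t)| \ge a(t)|x(t)| - |b(t)|M_0(t) \ge -a(t)\bigl(M_0(t_1)-|x(t)|\bigr) \ge -M_a\, y(t),
\end{align*}
and a standard Gronwall-type comparison gives $y(t_2)\ge y(t_1)e^{-M_a(t_2-t_1)}$, which is the desired inequality after rearrangement.

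\textbf{Case 2 ($S_\eta$).} This is the delicate case. Fix $\widetilde{M}\ge M_0(t_1)$; by Lemma~\ref{lemNonincreaseEstimation1} again $M_0(t)\le M_0(t_1)\le \widetilde{M}$ throughout. The key computation is that whenever $|x(t)|\ge \delta\widetilde{M}$,
\begin{align*}
D^+|x(t)|\le -a(t)\delta\widetilde{M}+|b(t)|\widetilde{M} = \widetilde{M}\bigl(|b(t)|-a(t)+a(t)(1-\delta)\bigr)\le \widetilde{M}\Bigl(-\eta+M_a\cdot\frac{\eta}{2M_a}\Bigr)=-\frac{\eta}{2}\widetilde{M},
\end{align*}
using $a(t)-|b(t)|>\eta$ and $1-\delta=\eta/(2M_a)$. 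I then split on a dichotomy: either $|x(t)|\ge \delta\widetilde{M}$ for every $t\in[t_1,t_2]$, in which case integrating the above pointwise bound yields $|x(t_2)|\le |x(t_1)|-\tfrac{\eta}{2}\mu(S_\eta(t_1,t_2))\widetilde{M}$; or there is a first time $t^*\in[t_1,t_2]$ at which $|x(t^*)|\le \delta\widetilde{M}$, and I must show that $|x|$ stays at or below $\delta\widetilde{M}$ for the remainder of the interval, yielding $|x(t_2)|\le \delta\widetilde{M}$. This trapping is where the technical work lies.

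\textbf{Case 3 ($S_-$).} On $(t_1,t_2)$ we have $a(t)<|b(t)|\le M_b$. By Lemma~\ref{lemIncreasingEstimation} applied on $[t_1,t]$ for $t\in[t_1,t_2]$, $M_0(t)\le M_0(t_1)e^{M_b(t-t_1)}$. Dropping the non-positive term $-a(t)|x(t)|$ and using $|b(t)|\le M_b$,
\begin{align*}
D^+|x(t)|\le M_b\, M_0(t_1)\,e^{M_b(t-t_1)},
\end{align*}
and direct integration from $t_1$ to $t_2$ gives the stated bound.

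The main obstacle is the trapping step in Case~2: once $|x|$ dips below $\delta\widetilde{M}$, I must forbid it from climbing back up, but I only have control of $D^+$, not a classical derivative. My plan is a first-exit argument. Let $\tau:=\inf\{t\in[t^*,t_2]:|x(t)|> \delta\widetilde{M}\}$; if $\tau<t_2$ then continuity forces $|x(\tau)|=\delta\widetilde{M}$, and applying the displayed inequality at $\tau$ gives $D^+|x(\tau)|\le -\tfrac{\eta}{2}\widetilde{M}<0$, which by the definition of the upper right Dini derivative produces $|x(\tau+h)|<\delta\widetilde{M}$ for all sufficiently small $h>0$, contradicting the infimum. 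Hence $\tau=t_2$ and the trapping holds.
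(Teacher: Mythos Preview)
Your proposal is correct and, for Cases~1 and~2, follows the same line as the paper: the paper derives $D^{+}|x(t)|\le -\tfrac{\eta}{2}\widetilde{M}$ whenever $|x(t)|\ge\delta\widetilde{M}$ exactly as you do and then simply asserts the $\max$ conclusion, whereas you additionally spell out the first-exit trapping argument that makes the second alternative rigorous.

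Your Case~3, however, is cleaner than the paper's. The paper splits into two sub-cases according to whether $M_{0}(t)\le M_{0}(t_{1})$ holds throughout $(t_{1},t_{2})$. In the first sub-case it uses the constant bound $D^{+}|x|\le M_{b}M_{0}(t_{1})$ and then the elementary inequality $M_{b}(t_{2}-t_{1})\le e^{M_{b}(t_{2}-t_{1})}-1$; in the second it introduces the first time $t^{*}$ at which $|x(t^{*})|=M_{0}(t_{1})$, applies Lemma~\ref{lemIncreasingEstimation} on $[t^{*},t_{2}]$, and reassembles via an algebraic chain. You instead invoke Lemma~\ref{lemIncreasingEstimation} once at the outset to get the envelope $M_{0}(t)\le M_{0}(t_{1})e^{M_{b}(t-t_{1})}$, bound $D^{+}|x(t)|\le M_{b}M_{0}(t_{1})e^{M_{b}(t-t_{1})}$, and integrate directly. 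This collapses the paper's two sub-cases into a single line and yields the identical bound.
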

\begin{proof}

\begin{enumerate}
\item $(t_{1},t_{2})=S_{+}(t_{1},t_{2})$; In this case, by Lemma
\ref{lemNonincreaseEstimation1}, $M_{0}(t)$ is nonincreasing on
$(t_{1},t_{2})$. Then,
\begin{align*}
D^{+}|x(t)|\le -a(t)|x(t)|+|b(t)|M_{0}(t_{1}).
\end{align*}
By some calculations, we have
\begin{eqnarray*}
|x(t_{2})|&\le & M_{0}(t_{1})-[M_{0}(t_{1})-|x(t_{1})|]e^{-M_{a}(t_{2}-t_{1})}.
\end{eqnarray*}
\item
$(t_{1},t_{2})=S_{\eta}(t_{1},t_{2})$; From Lemma
\ref{lemNonincreaseEstimation1}, $M_{0}(t)$ is nonincreasing on
$(t_{1},t_{2})$. For any $t\in (t_{1},t_{2})$, if $|x(t)|\ge \delta
\widetilde{M}\ge \delta M_{0}(t)$ for $t\in (t_{1},t_{2})$, then
\begin{align*}
D^{+}|x(t)|\le -a(t)|x(t)|+|b(t)|\widetilde{M} \le -a(t)\delta
\widetilde{M}+|b(t)|\widetilde{M} =-[\delta
a(t)-|b(t)|]\widetilde{M} &\le -\frac{\eta}{2}\widetilde{M}.
\end{align*}
Thus,
\begin{align*}
|x(t_{2})|&\le \max\{\delta \widetilde{M},|x(t_{1})|
-\frac{\eta}{2}\mu(S_{\eta}(t_{1},t_{2}))\widetilde{M})\}\nonumber\\
&=M_{0}(t_{1})-\big[M_{0}(t_{1})-\max\{\delta
M_{0}(t_{1}),|x(t_{1})|
-\frac{\eta}{2}\mu(S_{\eta}(t_{1},t_{2}))M_{0}(t_{1})\}\big].
\end{align*}

\item
$(t_{1},t_{2})=S_{-}(t_{1},t_{2})$; If $M_{0}(t)\le M_{0}(t_{1})$ on
$(t_{1},t_{2})$, then
\begin{eqnarray}
D^{+}|x(t)|\le -a(t)|x(t)|+|b(t)|M_{0}(t_{1})\le M_{b}M_{0}(t_{1}).
\end{eqnarray}
Thus,
\begin{eqnarray*}
  |x(t_{2})|\le |x(t_{1})|+M_{b}M_{0}(t_{1})(t_{2}-t_{1})\le
  |x(t_{1})|+M_{0}(t_{1})\big[e^{M_{b}(t_{2}-t_{1})}-1\big].
\end{eqnarray*}
Otherwise, let $t^{*}=\inf\{t\in
(t_{1},t_{2}):~|x(t^{*})|=M_{0}(t_{1})\}$.

Then for $t\in (t_{1},t^{*})$, we have
\begin{eqnarray}
D^{+}|x(t)|\le -a(t)|x(t)|+|b(t)|M_{0}(t_{1})\le M_{b}M_{0}(t_{1}).
\end{eqnarray}
which implies
\begin{eqnarray*}
M_{0}(t_{1})=|x(t^{*})|\le
|x(t_{1})|+M_{b}M_{0}(t_{1})(t^{*}-t_{1})\le
|x(t_{1})|+M_{0}(t_{1})\big[e^{M_{b}(t^{*}-t_{1})}-1\big].
\end{eqnarray*}

Therefore, (noting Lemma \ref{lemIncreasingEstimation}), we have:
\begin{align*}
|x(t_{2})|&\le M_{0}(t_{2})\le M_{0}(t^*)e^{M_{b}(t_{2}-t^{*})}\le
M_{0}(t_{1})e^{M_{b}(t_{2}-t^{*})}\\
&=M_{0}(t_{1})+ M_{0}(t_{1})\big[e^{M_{b}(t_{2}-t^{*})}-1\big]\\
&\le|x(t_{1})|+M_{0}(t_{1})\big[e^{M_{b}(t^{*}-t_{1})}-1\big]+M_{0}(t_{1})\big[e^{M_{b}(t_{2}-t^{*})}-1\big]\\
&\le|x(t_{1})|+M_{0}(t_{1})\big[e^{M_{b}(t^{*}-t_{1})}-1\big]+M_{0}(t_{1})e^{M_{b}(t^{*}-t_{1})}\big[e^{M_{b}(t_{2}-t^{*})}-1\big]\\
&=|x(t_{1})|+M_{0}(t_{1})\big[e^{M_{b}(t_{2}-t_{1})}-1\big].
\end{align*}
\end{enumerate}
\end{proof}

The estimation given in the following Lemma is the key step of the proof of
main Theorem.

\begin{lemma}\label{lemEstimationMain}
For any $t_{1}<t_{2}$,
\begin{align}\label{eqnBasicInequality}
|x(t_{2})|&\le
M_{0}(t_{1})e^{M_{b}\mu_{-}(t_{1},t_{2})}-\big[M_{0}(t_{1})-\max\{\delta
M_{0}(t_{1}),|x(t_{1})|
-\frac{\eta}{2}\mu_{\eta}(t_{1},t_{2})M_{0}(t_{1})\}\big]
e^{-M_{a}\mu_{+}(t_{1},t_{2})}.
\end{align}
\end{lemma}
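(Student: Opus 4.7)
My plan is to prove \eqref{eqnBasicInequality} by induction on the number $m$ of maximal connected subintervals (each of type $S_+$, $S_-$, or $S_\eta$) composing $(t_1,t_2)$; under the standing piecewise-continuity assumption on $a(\cdot),b(\cdot)$, $m$ is finite on any bounded window. Throughout I write $L=M_0(t_1)$, and for any split point $s\in(t_1,t_2)$, $L_1 = L\,e^{M_b\mu_-(t_1,s)}$ and $A_1 = \max\{\delta L,\ |x(t_1)|-\tfrac{\eta}{2}\mu_\eta(t_1,s)L\}$; the target constant is $A = \max\{\delta L,\ |x(t_1)|-\tfrac{\eta}{2}\mu_\eta(t_1,t_2)L\}$.

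For the base case $m=1$, the interval is of a single type and Lemma~\ref{lemEstimationMain1} applies directly. In the $S_\eta$ case, choosing $\widetilde M = L$ makes the target formula an exact equality with the Lemma~\ref{lemEstimationMain1} bound. In the $S_+$ case, $\mu_\eta=\mu_-=0$, so $A\ge|x(t_1)|$, and the target bound is a (possibly lossy) upper bound on the tighter $L-(L-|x(t_1)|)e^{-M_a\mu_+}$ from Lemma~\ref{lemEstimationMain1}; the $S_-$ case is analogous.

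For the inductive step I split $(t_1,t_2)=(t_1,s)\cup(s,t_2)$ so that $(s,t_2)$ is the last maximal subinterval and $(t_1,s)$ contains the first $m-1$ pieces. The induction hypothesis together with Lemma~\ref{lemIncreasingEstimation} yield $|x(s)|\le L_1-(L-A_1)e^{-M_a\mu_+(t_1,s)}$ and $M_0(s)\le L_1$. I then apply the appropriate case of Lemma~\ref{lemEstimationMain1} to $(s,t_2)$, using $\widetilde M=L_1$ when $(s,t_2)\subset S_\eta$. When $(s,t_2)\subset S_+$ or $S_-$, no new $\mu_\eta$-mass is added so $A=A_1$, and the two estimates telescope algebraically into the target formula.

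The delicate case is $(s,t_2)\subset S_\eta$, where Lemma~\ref{lemEstimationMain1} produces $|x(t_2)|\le\max\{\delta L_1,\ |x(s)|-\tfrac{\eta}{2}\mu_\eta(s,t_2)L_1\}$ with $\mu_-,\mu_+$ unchanged from $(t_1,s)$. For the first alternative I need $(L-A)e^{-M_a\mu_+(t_1,s)}\le(1-\delta)L_1$, which is routine from $A\ge\delta L$ and $L\le L_1$. Substituting the induction hypothesis into the second alternative reduces the problem to
\begin{align*}
(A_1-A)\,e^{-M_a\mu_+(t_1,s)}\le\tfrac{\eta}{2}\,\mu_\eta(s,t_2)\,L_1.
\end{align*}
A short case analysis on which argument realizes the $\max$ in $A_1$ and in $A$ yields $A_1-A\le\tfrac{\eta}{2}\mu_\eta(s,t_2)L$ in every branch, and then $L\le L_1$ together with $e^{-M_a\mu_+(t_1,s)}\le 1$ closes the estimate. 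This synchronization between the ``gap budget'' $L-A$ and the incremental $\tfrac{\eta}{2}L$-decrement available on $S_\eta$, together with the specific choice $\delta=1-\eta/(2M_a)$, is the technical heart of the proof.
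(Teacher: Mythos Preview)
Your proposal is correct and follows essentially the same induction scheme as the paper: base case a single-type interval handled via Lemma~\ref{lemEstimationMain1}, inductive step appending one more single-type piece with the $S_\eta$ case being the nontrivial one. The only cosmetic difference is that in the $S_\eta$ inductive step the paper takes $\widetilde M=\max\{M_0(t_1),M_0(s)\}$ whereas you take $\widetilde M=L_1=L\,e^{M_b\mu_-(t_1,s)}$; both choices satisfy the hypothesis $\widetilde M\ge M_0(s)$ of Lemma~\ref{lemEstimationMain1}, and your bookkeeping with $A,A_1,L,L_1$ and the explicit case analysis on $A_1-A$ is a cleaner packaging of the long chain of inequalities the paper writes out.
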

\begin{proof}
We prove this lemma by induction.

Step 1. We verify the initial case that $(t_{1},t_{2})$ is contained
in only one of $S_{+}(t_{1},t_{2})$, $S_{\eta}(t_{1},t_{2})$ and
$S_{-}(t_{1},t_{2})$. There are totally three cases corresponding to
those considered in Lemma \ref{lemEstimationMain1}.
\begin{enumerate}
  \item
  $(t_{1},t_{2})=S_{+}(t_{1},t_{2})$. In this case,
  \eqref{eqnBasicInequality} reduces to
  \begin{align*}
|x(t_{2})|&\le M_{0}(t_{1})-\big[M_{0}(t_{1})-\max\{\delta
M_{0}(t_{1}),|x(t_{1})| \}\big] e^{-M_{a}\mu_{+}(t_{1},t_{2})}\\
&=
M_{0}(t_{1})\big[(1-e^{-M_{a}\mu_{+}(t_{1},t_{2})}\big]+\max\{\delta
M_{0}(t_{1}),|x(t_{1})| \}e^{-M_{a}\mu_{+}(t_{1},t_{2})}.
\end{align*}
This is obvious from case 1 of Lemma \ref{lemEstimationMain1}.

\item
$(t_{1},t_{2})=S_{\eta}(t_{1},t_{2})$. In this case,
\eqref{eqnBasicInequality} reduces to
\begin{align*}
|x(t_{2})|&\le M_{0}(t_{1})-\big[M_{0}(t_{1})-\max\{\delta
M_{0}(t_{1}),|x(t_{1})|
-\frac{\eta}{2}\mu_{\eta}(t_{1},t_{2})M_{0}(t_{1})\}\big]\\
&=\max\{\delta M_{0}(t_{1}),|x(t_{1})|
-\frac{\eta}{2}\mu_{\eta}(t_{1},t_{2})M_{0}(t_{1})\}.
\end{align*}
This can be obtained by case 2 of Lemma \ref{lemEstimationMain1} by
letting $\widetilde{M}=M_{0}(t_{1})$.

\item
$(t_{1},t_{2})=S_{-}(t_{1},t_{2})$. In this case,
\eqref{eqnBasicInequality} reduces to
\begin{align*}
|x(t_{2})|&\le
M_{0}(t_{1})e^{M_{b}\mu_{-}(t_{1},t_{2})}-\big[M_{0}(t_{1})-\max\{\delta
M_{0}(t_{1}),|x(t_{1})| \}\big]\\
&=\max\{\delta
M_{0}(t_{1}),|x(t_{1})|\}+M_{0}(t_{1})[e^{M_{b}\mu_{-}(t_{1},t_{2})}-1]
\end{align*}
This is also obvious from case 3 of Lemma \ref{lemEstimationMain1}.
\end{enumerate}

Step 2. Assume that \eqref{eqnBasicInequality} holds for a given
$(t_{1}$, $t_{2})$, and consider $(t_{1}$, $t_{3})$, where $t_{3}>t_{2}$
and $(t_{2},t_{3})$ is contained in only one of
$S_{+}(t_{2},t_{3})$, $S_{\eta}(t_{2},t_{3})$ and
$S_{-}(t_{2},t_{3})$. There are also three cases.
\begin{enumerate}
  \item
  Case 1. $(t_{2},t_{3})=S_{+}(t_{2},t_{3})$. In this case, from Lemma
  \ref{lemEstimationMain1}, we have
  \begin{align*}
    &|x(t_{3})|\le
    M_{0}(t_{2})-[M_{0}(t_{2})-|x(t_{2})|]e^{-M_{a}\mu_{+}(t_{2},t_{3})}\\
    &=M_{0}(t_{2})\big[1-e^{-M_{a}\mu_{+}(t_{2},t_{3})}\big]+|x(t_{2})|e^{-M_{a}\mu_{+}(t_{2},t_{3})}\\
    &\le M_{0}(t_{1})e^{M_{b}\mu_{-}(t_{1},t_{2})}\big[1-e^{-M_{a}\mu_{+}(t_{2},t_{3})}\big]
    +\bigg\{M_{0}(t_{1})e^{M_{b}\mu_{-}(t_{1},t_{2})}\\
    &-\bigg[M_{0}(t_{1})-\max\{\delta
M_{0}(t_{1}),|x(t_{1})|
-\frac{\eta}{2}\mu_{\eta}(t_{1},t_{2})M_{0}(t_{1})\}\bigg]
e^{-M_{a}\mu_{+}(t_{1},t_{2})}\bigg\}e^{-M_{a}\mu_{+}(t_{2},t_{3})}\\
&=M_{0}(t_{1})e^{M_{b}\mu_{-}(t_{1},t_{2})}-\big[M_{0}(t_{1})-\max\{\delta
M_{0}(t_{1}),|x(t_{1})|
-\frac{\eta}{2}\mu_{\eta}(t_{1},t_{2})M_{0}(t_{1})\}\big]
e^{-M_{a}\mu_{+}(t_{1},t_{3})}\\
&=M_{0}(t_{1})e^{M_{b}\mu_{-}(t_{1},t_{3})}-\big[M_{0}(t_{1})-\max\{\delta
M_{0}(t_{1}),|x(t_{1})|
-\frac{\eta}{2}\mu_{\eta}(t_{1},t_{3})M_{0}(t_{1})\}\big]
e^{-M_{a}\mu_{+}(t_{1},t_{3})}.
  \end{align*}

  \item
  Case 2. $(t_{2},t_{3})=S_{\eta}(t_{2},t_{3})$. In this case, let
  $\widetilde{M}=\max\{M_{0}(t_{1}),M_{0}(t_{2})\}$,
  then $M_{0}(t_{1})\le\widetilde{M}\le
  M_{0}(t_{1})e^{M_{b}\mu_{-}(t_{1},t_{2})}=M_{0}(t_{1})e^{M_{b}\mu_{-}(t_{1},t_{3})}$.

  From Lemma \ref{lemEstimationMain1}, we have
  \begin{align*}
    |x(t_{3})|\le &\max\{\delta
    \widetilde{M},|x(t_{2})|-\frac{\eta}{2}\mu_{\eta}(t_{2},t_{3})\widetilde{M}\}\\
    \le&\max\{\delta
    M_{0}(t_{1})e^{M_{b}\mu_{-}(t_{1},t_{3})},|x(t_{2})|-\frac{\eta}{2}\mu_{\eta}(t_{2},t_{3})M_{0}(t_{1})\}\\
    \le &\max\{\delta
    M_{0}(t_{1})e^{M_{b}\mu_{-}(t_{1},t_{3})},M_{0}(t_{1})e^{M_{b}\mu_{-}(t_{1},t_{2})}
    -\big[M_{0}(t_{1})-\max\{\delta
M_{0}(t_{1}),\\
&|x(t_{1})| -\frac{\eta}{2}\mu_{\eta}(t_{1},t_{2})M_{0}(t_{1})\}\big]
e^{-M_{a}\mu_{+}(t_{1},t_{2})}-\frac{\eta}{2}\mu_{\eta}(t_{2},t_{3})M_{0}(t_{1})\}\\
\le &\max\{\delta
    M_{0}(t_{1})e^{M_{b}\mu_{-}(t_{1},t_{3})},M_{0}(t_{1})e^{M_{b}\mu_{-}(t_{1},t_{3})}
    -\big[M_{0}(t_{1})-\max\{\delta
M_{0}(t_{1}),\\
&|x(t_{1})| -\frac{\eta}{2}\mu_{\eta}(t_{1},t_{3})M_{0}(t_{1})\}\big]
e^{-M_{a}\mu_{+}(t_{1},t_{3})}\big\}\\
=& M_{0}(t_{1})e^{M_{b}\mu_{-}(t_{1},t_{3})}-\min\{(1-\delta)
    M_{0}(t_{1})e^{M_{b}\mu_{-}(t_{1},t_{3})},\big[M_{0}(t_{1})-\max\{\delta
M_{0}(t_{1}),\\
&|x(t_{1})| -\frac{\eta}{2}\mu_{\eta}(t_{1},t_{3})M_{0}(t_{1})\}\big]
e^{-M_{a}\mu_{+}(t_{1},t_{3})}\}\\
\le & M_{0}(t_{1})e^{M_{b}\mu_{-}(t_{1},t_{3})}-\min\{(1-\delta)
    M_{0}(t_{1}),\big[M_{0}(t_{1})-\max\{\delta
M_{0}(t_{1}),\\
&|x(t_{1})| -\frac{\eta}{2}\mu_{\eta}(t_{1},t_{3})M_{0}(t_{1})\}\big]
e^{-M_{a}\mu_{+}(t_{1},t_{3})}\}\\
\le & M_{0}(t_{1})e^{M_{b}\mu_{-}(t_{1},t_{3})}-\min\{(1-\delta)
    M_{0}(t_{1}),M_{0}(t_{1})-\max\{\delta
M_{0}(t_{1}),\\
&|x(t_{1})| -\frac{\eta}{2}\mu_{\eta}(t_{1},t_{3})M_{0}(t_{1})\}
\}e^{-M_{a}\mu_{+}(t_{1},t_{3})}\\
=& M_{0}(t_{1})e^{M_{b}\mu_{-}(t_{1},t_{3})}-\big[
    M_{0}(t_{1})-\max\{\delta M_{0}(t_{1}),\max\{\delta
M_{0}(t_{1}),\\
&|x(t_{1})| -\frac{\eta}{2}\mu_{\eta}(t_{1},t_{3})M_{0}(t_{1})\}\}
\big]e^{-M_{a}\mu_{+}(t_{1},t_{3})}\\
=& M_{0}(t_{1})e^{M_{b}\mu_{-}(t_{1},t_{3})}-\big[
    M_{0}(t_{1})-\max\{\delta M_{0}(t_{1}),
|x(t_{1})|\\
& -\frac{\eta}{2}\mu_{\eta}(t_{1},t_{3})M_{0}(t_{1})\}
\big]e^{-M_{a}\mu_{+}(t_{1},t_{3})}
  \end{align*}

  \item
  Case 3. $(t_{2},t_{3})=S_{-}(t_{2},t_{3})$. In this case, from Lemma
  \ref{lemEstimationMain1}, we have
  \begin{align*}
  |x(t_{3})|\le&
  |x(t_{2})|+M_{0}(t_{2})\big[e^{M_{b}\mu_{-}(t_{2},t_{3})}-1\big]\\
  \le&M_{0}(t_{1})e^{M_{b}\mu_{-}(t_{1},t_{2})}-\big[M_{0}(t_{1})-\max\{\delta
M_{0}(t_{1}),|x(t_{1})|\\
&-\frac{\eta}{2}\mu_{\eta}(t_{1},t_{2})M_{0}(t_{1})\}\big]
e^{-M_{a}\mu_{+}(t_{1},t_{2})}
+M_{0}(t_{1})e^{M_{b}\mu_{-}(t_{1},t_{2})}\big[e^{M_{b}\mu_{-}(t_{2},t_{3})}-1\big]\\
=&M_{0}(t_{1})e^{M_{b}\mu_{-}(t_{1},t_{3})}-\big[M_{0}(t_{1})-\max\{\delta
M_{0}(t_{1}),|x(t_{1})|\\
&-\frac{\eta}{2}\mu_{\eta}(t_{1},t_{3})M_{0}(t_{1})\}\big]
e^{-M_{a}\mu_{+}(t_{1},t_{3})}
  \end{align*}
  The proof is completed.
\end{enumerate}

\end{proof}

Based on the estimation (\ref{eqnBasicInequality}) given in Lemma
\ref{lemEstimationMain}, we are to prove Theorem \ref{thmMain}.

{\bf\it Proof of Theorem 1.}

For $t\in [t_{1}^{-},t_{1}]$, from Lemma \ref{lemEstimationMain},
and noting $|x(t_{0})|\le M_{0}(t_{0})$, we have
\begin{align*}
|x(t)|&\le M_{0}(t_{0})e^{M_{b}\mu_{-}(t_{0},t)}-\Big[M_{0}(t_{0})
-\max\Big\{\delta
M_{0}(t_{0}),|x(t_{0})|-\frac{\eta}{2}\mu_{\eta}(t_{0},t)M_{0}(t_{0})\Big\}\Big]
e^{-M_{a}\mu_{+}(t_{0},t)}\\
&\le M_{0}(t_{0})e^{M_{b}\mu_{-}(t_{0},t)}-\Big[M_{0}(t_{0})
-\max\Big\{\delta
M_{0}(t_{0}),M_{0}(t_{0})-\frac{\eta}{2}\mu_{\eta}(t_{0},t)M_{0}(t_{0})\Big\}\Big]
e^{-M_{a}\mu_{+}(t_{0},t)}\\
&=M_{0}(t_{0})\bigg[e^{M_{b}\mu_{-}(t_{0},t)}
-\min\{1-\delta,\frac{\eta}{2}\mu_{\eta}(t_{0},t)\}e^{-M_{a}\mu_{+}(t_{0},t)}\bigg]\\
&=M_{0}(t_{0})\bigg[e^{M_{b}\mu_{-}(t_{0},t)}
-\frac{\eta}{2}\min\{\frac{1}{M_{a}},\mu_{\eta}(t_{0},t)\}e^{-M_{a}\mu_{+}(t_{0},t)}\bigg]
\end{align*}
This implies
\begin{align*}
M_{0}(t_{1})\le&
M_{0}(t_{0})\Big[e^{M_{b}\mu_{-}(t_{0},t_{1})}-\frac{\eta}{2}\min\Big\{\frac{1}{M_{a}},
\mu_{\eta}(t_{0},t_{1}^{-})\Big\}e^{-M_{a}\mu_{+}(t_{0},t_{1})}\Big]\\
\le&M_{0}(t_{0})\Big[e^{M_{b}\mu_{-}(t_{0},t_{1})}-\frac{\eta}{2}\min\Big\{\frac{1}{M_{a}},
\mu_{\eta}(t_{0},t_{1}^{-})\Big\}e^{-M_{a}(N+1)\tau_{\max}}\Big]
\end{align*}
Repeating this process, we have
\begin{align*}
&M_{0}(t_{m})\le
M_{0}(t_{0})\prod_{k=0}^{m-1}\Big[e^{M_{b}\mu_{-}(t_{k},t_{k+1})}
-\frac{\eta}{2}\min\Big\{\frac{1}{M_{a}},\mu_{\eta}(t_{k},
t_{k+1}^{-})\Big\} e^{-M_{a}(N+1)\tau_{\max}} \Big]
\end{align*}
Under the condition \eqref{eqnConditionMain}, for a given $C\in
(C^{*},\eta/2)$, we can choose $k^{*}$ large enough such that for
all $k\ge k^{*}$,
\begin{align*}
\frac{[e^{M_{b}\mu_{-}(t_{k},t_{k+1})}-1] e^{M_{a}(N+1)\tau_{\max}}}
{\min\{\frac{1}{M_{a}},\mu_{\eta}(t_{k},t_{k+1}^{-})\}}\le C
\end{align*}
which implies
\begin{align*}
e^{M_{b}\mu_{-}(t_{k},t_{k+1})}\le
1+C\min\{\frac{1}{M_{a}},\mu_{\eta}(t_{k},t_{k+1}^{-})\}e^{-M_{a}(N+1)\tau_{\max}}
\end{align*}
Thus for $m>k^{*}$,
\begin{align}\label{eqnKstar}
&M_{0}(t_{m})\le
M_{0}(t_{k^{*}})\prod_{k=k^{*}-1}^{m-1}\Big[1-(\frac{\eta}{2}-C)
\min\{\frac{1}{M_{a}},\mu_{\eta}(t_{k},t_{k+1}^{-})\}e^{-M_{a}(N+1)\tau_{\max}}\Big]
\end{align}

By the condition \eqref{eqnConditionMaina}, we have
\begin{align}\label{eqnConditionInfinity}
\sum_{k=0}^{+\infty}\min\{\frac{1}{M_{a}},\mu_{\eta}(t_{k},
t_{k+1}^{-})\}e^{-M_{a}(N+1)\tau_{\max}}=\infty.
\end{align}

Thus
\begin{eqnarray*}
\lim_{m\to+\infty}M_{0}(t_{m})=0
\end{eqnarray*}

For $t\in [t_{m}^{-},t_{m}]$, $|x(t)|\le M_{0}(t_{m})$, and for
$t\in [t_{m},t_{m+1}^{-}]$, we have the estimation that
\begin{eqnarray}\label{eqnXestimation}
|x(t)|\le M_{0}(t_{m})e^{M_{b}N\tau_{\max}}.
\end{eqnarray}
Therefore,we have
\begin{eqnarray*}
\lim_{t\to+\infty}|x(t)|=0.
\end{eqnarray*}
On the other hand, when $a(\cdot)$, $b(\cdot)$ are given, we can
find a fixed $k^{*}$ for \eqref{eqnKstar}. Thus let $K'=\max_{1\le
k\le k^{*}}M_{0}(t_{k})/M_{0}(t_{0})$, we have
$$M_{0}(t_{m})\le K'M_{0}(0)=\max_{-\tau_{\max}\le s\le 0}K'|\phi(s)|$$
for each $m$. Let $K=K'e^{M_{b}N\tau_{\max}}$, then
$$|x(t)|\le K\max_{-\tau_{\max}\le s\le 0}|\phi(s)|$$ for each $t$.

Furthermore, if there exists $\epsilon>0$ such that
$\mu_{\eta}(t_{k},t_{k+1})\ge \epsilon$, then
$$
1-(\frac{\eta}{2}-C)
\min\{\frac{1}{M_{a}},\mu_{\eta}(t_{k},t_{k+1}^{-})\}e^{-M_{a}(N+1)\tau_{\max}}
\le
1-(\frac{\eta}{2}-C)\min\{\frac{1}{M_{a}},\epsilon\}e^{-M_{a}(N+1)\tau_{\max}}\triangleq
\lambda_{0}<1.
$$
Then from \eqref{eqnKstar} we have for $m\ge k^{*}$,
\begin{eqnarray*}
M_{0}(t_{m})\le M_{0}(t_{k^{*}})\lambda_{0}^{m-k^{*}}.
\end{eqnarray*}
Thus for $t\in [t_{m},t_{m+1}]$, we have the estimation
\begin{eqnarray*}
|x(t)|&\le& M_{0}(t_{m})e^{M_{b}(N+1)\tau_{\max}}\le
M_{0}(t_{k^{*}})e^{M_{b}(N+1)\tau_{\max}}\lambda_{0}^{m-k^{*}} \le
K'M_{0}(t_{0})e^{M_{b}(N+1)\tau_{\max}}\lambda_{0}^{-(k^{*}+1)}e^{(m+1)\ln
\lambda_{0}}\\
&=&K'M_{0}(t_{0})e^{M_{b}(N+1)\tau_{\max}}\lambda_{0}^{-(k^{*}+1)}e^{\frac{\ln
\lambda_{0}}{(N+1)\tau_{\max}}(m+1)(N+1)\tau_{\max}}\\
&\le&K'M_{0}(t_{0})e^{M_{b}(N+1)\tau_{\max}}\lambda_{0}^{-(k^{*}+1)}e^{\frac{\ln
\lambda_{0}}{(N+1)\tau_{\max}}t},
\end{eqnarray*}
where the last inequality comes from the fact that $\frac{\ln
\lambda_{0}}{(N+1)\tau_{\max}}<0$ and $t\le (m+1)(N+1)\tau_{\max}$.
Let $\alpha=-\frac{\ln\lambda_{0}}{(N+1)\tau_{\max}}>0$, and
$\widetilde{K}=\max\big\{\max_{t_{0}\le t\le
t_{k^{*}}}\frac{|x(t)|}{M_{0}(t)},K'e^{M_{b}(N+1)\tau_{\max}}\lambda_{0}^{-(k^{*}+1)}\big\}e^{\alpha
k^{*}(N+1)\tau_{\max}}$, then
\begin{eqnarray*}
  |x(t)|\le \widetilde{K}\max_{-\tau_{\max}\le s\le 0}|\phi(s)| e^{-\alpha
  t}.
\end{eqnarray*}

The proof is completed.

\section{Applications }\label{secApplication} In this section, we will give two
applications the theoretical results, including self synchronization
in a class of neural networks with time varying delays, and the
existence and exponential stability of periodic solutions of a class
of neural networks with periodic coefficients and delays.
\subsection{Self synchronization of neural
networks}\label{secSelfSyn} First, we apply the theoretical results
obtained in previous sections to the self synchronization analysis
of neural networks. In Liu, Lu \& Chen(2011), we have discussed
almost sure self synchronization in neural networks with randomly
switching connections without time delays. In this paper, we discuss
self synchronization in neural networks with bounded time-varying
delays.

To be more general, consider the following Volterra functional
differential systems
\begin{align}\label{eqnOuterSyn}
\frac{dx_{i}(t)}{dt}=&-d_{i}(t)x_{i}(t)
+f_{i}(x_{1},\cdots,x_{n},x_{1}(t-\tau_{i1}(t)),\cdots,
x_{n}(t-\tau_{in}(t)),t)+I_{i}(t),~~i=1,\cdots,n,
\end{align}
where
\begin{enumerate}
\item[(i)] 
\begin{eqnarray*}
\bigg|\frac{\partial f_{i}(u_{1},\cdots,u_{n},v_{1},\cdots,v_{n},t)}{\partial u_{j}}\bigg|\le A_{ij}(t),\\
\bigg|\frac{\partial f_{i}(u_{1},\cdots,u_{n},v_{1},\cdots,v_{n},t)}{\partial v_{j}}\bigg|\le B_{ij}(t);
\end{eqnarray*}
\item[(ii)]
$\tau_{ij}(t)\le \tau_{\max},~~i,j=1,2,\cdots,n$.
\end{enumerate}
Before state our main result, we first extend the $\eta$-condition
to the case of $n$ function pairs.
\begin{definition}[Common $\eta$-condition] Given $n$ function
pairs $\{a_{i}(\cdot),b_{i}(\cdot)\}_{i=1}^{n}$ with
$0<a_{i}(\cdot)\le M_{a}$, $|b_{i}(\cdot)|\le M_{b}$, we say they
satisfy the {\bf common $\eta$-condition} if the function pair
$\{a(\cdot),b(\cdot)\}$ satisfies the $\eta$-condition, where
$a(t)=a_{i_{t}}(t), b(t)=b_{i_{t}}(t)$, with $i_{t}$ satisfying
$a_{i_{t}}(t)-|b_{i_{t}}(t)|=\min_{j}\{a_{j}(t)-|b_{j}(t)|\}$ for
each $t$.
\end{definition}

Then we have
\begin{theorem}\label{thmMain1}
Suppose that $0<d_{i}(t)-\sum_{j=1}^{n}A_{ij}(t)\le M_{a}$,
$\sum_{j=1}^{n}B_{ij}(t)\le M_{b}$ for any $t\ge 0$. If
$\{d_{i}(\cdot)-\sum_{j=1}^{n}A_{ij}(\cdot),\sum_{j=1}^{n}B_{ij}(\cdot)\}_{i=1}^{n}$
satisfy the common $\eta$-condition for a constant $\eta>0$, then
the network \eqref{eqnOuterSyn} will reach outer self
synchronization, i.e., for any two initial values $\phi(s),
\psi(s)\in \mathbb{R}^{n}$, $s\in[-\tau_{max},0]$, the trajectories
with initial values $\phi(s)$, $\psi(s)$ respectively will satisfy
\begin{eqnarray*}
\lim_{t\to\infty}\|x(t)-y(t)\|=0.
\end{eqnarray*}
\end{theorem}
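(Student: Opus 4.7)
The plan is to reduce Theorem \ref{thmMain1} to Theorem \ref{thmMain} by deriving a scalar Halanay-type inequality of the form \eqref{ineqnSystem1} for a suitable majorant of the difference $x-y$. Setting $z_i(t) = x_i(t) - y_i(t)$ kills the common $I_i(t)$ term. The mean value theorem applied to $f_i$ in its $2n$ spatial arguments, together with the bounds (i), yields
\begin{align*}
|f_i(x,x(\cdot-\tau),t) - f_i(y,y(\cdot-\tau),t)| \le \sum_{j=1}^{n} A_{ij}(t)|z_j(t)| + \sum_{j=1}^{n} B_{ij}(t)|z_j(t-\tau_{ij}(t))|.
\end{align*}
Combined with the standard estimate $D^+|z_i(t)| \le -d_i(t)|z_i(t)| + |f_i(x,\cdot,t)-f_i(y,\cdot,t)|$, this produces the componentwise differential inequality
\begin{align*}
D^+|z_i(t)| \le -d_i(t)|z_i(t)| + \sum_{j=1}^{n}A_{ij}(t)|z_j(t)| + \sum_{j=1}^{n}B_{ij}(t)|z_j(t-\tau_{ij}(t))|.
\end{align*}

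Next I take $V(t) = \max_{1\le i\le n}|z_i(t)|$. At almost every $t$ the maximum is attained uniquely at some index $i^*(t)$, and at such points $D^+V(t) \le D^+|z_{i^*(t)}(t)|$. Using $|z_j(t)| \le V(t)$ and $|z_j(t-\tau_{ij}(t))|\le \sup_{t-\tau_{\max}\le s\le t}V(s)$ in the componentwise inequality gives
\begin{align*}
D^+ V(t) \le -\alpha(t)V(t) + \beta(t)\sup_{t-\tau_{\max}\le s\le t}V(s),
\end{align*}
where $\alpha(t) = a_{i^*(t)}(t)$ and $\beta(t) = b_{i^*(t)}(t) \ge 0$, with $a_i(t) := d_i(t) - \sum_j A_{ij}(t)$ and $b_i(t) := \sum_j B_{ij}(t)$. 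The standing hypotheses give $0<\alpha(t)\le M_a$ and $|\beta(t)|\le M_b$, so $V$ falls within the framework of \eqref{ineqnSystem1}.

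To invoke Theorem \ref{thmMain}, I verify that $\{\alpha,\beta\}$ satisfies the $\eta$-condition. By the definition of $i_t$ in the common $\eta$-condition,
\begin{align*}
\alpha(t)-|\beta(t)| = a_{i^*(t)}(t) - b_{i^*(t)}(t) \ge \min_{j}\{a_j(t)-b_j(t)\} = a_{i_t}(t) - b_{i_t}(t)
\end{align*}
for every $t$. Writing $S_\bullet^{p,q}$ for the sets in Definition \ref{conditionBasic} attached to a pair $(p,q)$, this pointwise inequality yields $S_\eta^{\alpha,\beta} \supseteq S_\eta^{a_{i_t},b_{i_t}}$ and $S_-^{\alpha,\beta} \subseteq S_-^{a_{i_t},b_{i_t}}$ on every subinterval. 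Hence $\mu_\eta$ is no smaller and $\mu_-$ is no larger for $(\alpha,\beta)$ than for the worst-case common pair, so \eqref{eqnConditionMaina} is preserved and the limsup in \eqref{eqnConditionMain} can only decrease; thus $\{\alpha,\beta\}$ inherits the $\eta$-condition. Theorem \ref{thmMain} applied to $V$ then gives $V(t)\to 0$, equivalently $\|x(t)-y(t)\|\to 0$.

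The main obstacle is expected to be the passage from the coupled componentwise inequality for $|z_i|$ to a clean scalar inequality for $V$ driven by a single-index pair $(\alpha,\beta)$: one must handle the Dini derivative of $V$ correctly at the measure-zero set of times where several components tie for the maximum, and then rigorously transfer the $\eta$-condition from the solution-independent worst-case pair $(a_{i_t},b_{i_t})$ to the solution-dependent pair $(\alpha,\beta)$ via the monotonicity of $S_\eta$ and $S_-$ in the quantity $a(t)-|b(t)|$.
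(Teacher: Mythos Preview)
Your proposal is correct and follows essentially the same route as the paper: form the componentwise difference, derive the Dini inequality for $|z_i|$, pass to the scalar maximum $V(t)=\max_i|z_i(t)|$, and then invoke Theorem~\ref{thmMain}. The paper's proof is more terse---it overloads the symbol $i_t$ for the max-achieving index and simply asserts ``it is easy to see'' that the resulting pair inherits the $\eta$-condition---whereas you carefully separate the solution-dependent index $i^*(t)$ from the worst-case index $i_t$ and justify the inheritance via the pointwise inequality $\alpha(t)-|\beta(t)|\ge a_{i_t}(t)-|b_{i_t}(t)|$ and the monotonicity of the sets $S_\eta$ and $S_-$ in $a-|b|$; this is exactly the content the paper suppresses. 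One minor remark: the Dini-derivative bound $D^+V(t)\le D^+|z_{i^*(t)}(t)|$ actually holds at \emph{every} $t$ (taking the maximum over all indices achieving the tie), so you need not restrict to points of unique maximum.
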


\begin{proof}
Let $\tau(t)=\max_{i,j=1,\cdots,n}\tau_{ij}(t)$, $z_{i}(t)=|x_{i}(t)-y_{i}(t)|$,
and $z(t)=\max_{i}\{z_{i}(t)\}$, then we have
  \begin{eqnarray*}
&&D^{+}z_{i}(t)=-\sign(x_{i}(t)-y_{i}(t))d_{i}(t)[x_{i}(t)-y_{i}(t)]
+\sign(x_{i}(t)-y_{i}(t))[f_{i}(x_{1}(t),\cdots,x_{n}(t),\\
&&x_{1}(t-\tau_{i1}(t)),\cdots,x_{n}(t-\tau_{in}(t)),t)-f_{i}(y_{1}(t),\cdots,y_{n}(t),y_{1}(t-\tau_{i1}(t)),
\cdots,y_{n}(t-\tau_{in}(t)),t)]\\
&&\le
-d_{i}(t)z_{i}(t)+\sum_{j=1}^{n}A_{ij}(t)z_{j}(t)+\sum_{j=1}^{n}B_{ij}(t)z_{j}(t-\tau_{ij}(t))
\end{eqnarray*}
which implies
\begin{align*}
  D^{+}z(t)\le -[d_{i_{t}}(t)-\sum_{j=1}^{n}A_{i_{t}j}(t)]z(t)
  +\sum_{j=1}^{n}B_{i_{t}j}(t)\sup_{t-\tau(t)\le s \le t}z(s)
\end{align*}
It is easy to see that
$\{d_{i_{t}}(\cdot)-\sum_{j=1}^{n}A_{i_{t}j}(\cdot),\sum_{j=1}^{n}B_{i_{t}j}\}$
satisfies the $\eta$-condition with the same $\eta$ if
$\{d_{i}(\cdot)-\sum_{j=1}^{n}A_{ij}(\cdot),\sum_{j=1}^{n}B_{ij}(\cdot)\}$
satisfies the $\eta$-condition, by Theorem \ref{thmMain}, we
conclude
\begin{align}
\lim_{t\rightarrow\infty}z(t)=0,
\end{align}
which implies
\begin{eqnarray*}
    \lim_{t\to\infty}\|x(t)-y(t)\|=0.
  \end{eqnarray*}
\end{proof}

In particular, if $I_{i}(t)\equiv 0$ and $f_{i}(0,\cdots,0)=0$ for
$i=1,\cdots,n$. Then $x=0$ is a equilibrium. As a direct consequence
of Theorem 2, we have
\begin{corollary}
  Under the conditions in Theorem \ref{thmMain1}, if $I_{i}(t)\equiv 0$,
  and $f_{i}(0,\cdots,0)=0$ for each $i$, then the equilibrium $0$ of
  \eqref{eqnOuterSyn} is globally asymptotically stable.
\end{corollary}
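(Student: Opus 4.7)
The plan is to derive this corollary as an immediate specialization of Theorem \ref{thmMain1}, taking the identically zero function as one of the two comparison trajectories. First I would verify that $y(t) \equiv 0$ is itself a solution of \eqref{eqnOuterSyn}: plugging in $y_i \equiv 0$ on the right-hand side gives $-d_i(t)\cdot 0 + f_i(0,\dots,0,0,\dots,0,t) + I_i(t)$, which vanishes by the hypotheses $f_i(0,\dots,0)=0$ and $I_i(t)\equiv 0$. So the origin is a genuine equilibrium and the constant trajectory $y\equiv 0$ corresponds to the initial datum $\psi\equiv 0$.

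Next, for any initial value $\phi(s)\in\mathbb{R}^n$, $s\in[-\tau_{\max},0]$, I would apply Theorem \ref{thmMain1} to the pair $(x(t),y(t))$ where $x(t)$ is the solution starting from $\phi$ and $y(t)\equiv 0$. The conclusion of Theorem \ref{thmMain1} yields
\[
\lim_{t\to\infty}\|x(t)-y(t)\| = \lim_{t\to\infty}\|x(t)\| = 0,
\]
which is the global attractivity of the origin.

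To upgrade attractivity to \emph{global asymptotic} stability I still need Lyapunov stability, i.e.\ a uniform bound of the form $\|x(t)\|\le K\max_{s\in[-\tau_{\max},0]}\|\phi(s)\|$. For this I would inspect the proof of Theorem \ref{thmMain1}: it reduces the synchronization problem to the scalar inequality \eqref{ineqnSystem1} for $z(t)=\max_i|x_i(t)-y_i(t)|$ and then invokes Theorem \ref{thmMain}. Specializing again to $y\equiv 0$ so that $z(t)=\max_i|x_i(t)|$, the first conclusion of Theorem \ref{thmMain} provides exactly the desired constant $K>1$, depending only on $a(\cdot),b(\cdot)$ and not on $\phi$, with $z(t)\le K\max_{s}z(s)|_{[-\tau_{\max},0]}$. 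Combined with attractivity, this establishes global asymptotic stability.

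I do not anticipate any real obstacle here; the corollary is essentially a one-line consequence of Theorem \ref{thmMain1}. The only subtlety worth flagging is that one must read the stability bound out of the proof of Theorem \ref{thmMain1} (via Theorem \ref{thmMain}) rather than from its stated conclusion, which advertises only the attractivity $\|x-y\|\to 0$.
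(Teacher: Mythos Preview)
Your proposal is correct and matches the paper's approach: the paper gives no separate proof but simply states the corollary ``as a direct consequence of Theorem~\ref{thmMain1},'' and your argument---take $y\equiv 0$ as the second trajectory in Theorem~\ref{thmMain1}---is exactly the intended one-line derivation. Your additional remark about extracting the uniform bound $|x(t)|\le K\max_s|\phi(s)|$ from Theorem~\ref{thmMain} via the proof of Theorem~\ref{thmMain1} is a valid and useful observation that the paper leaves implicit.
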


\subsection{Periodic neural networks}\label{secPeriodicNN}
As another application, we discuss periodic neural networks, which
can be described as
\begin{eqnarray}
\frac{du_i(t)}{dt}=-d_{i}(t)u_{i}(t)+\sum_{j=1}^{n}a_{ij}(t)g_j(u_j(t))
+\sum_{j=1}^{n}b_{ij}(t)f_{j}(u_{j}(t-\tau_{ij}(t)))+I_i(t)\quad
 \label{periodic}
\end{eqnarray}
where $d_{i}(t)>d_{i}>0$, $a_{ij}(t), b_{ij}(t), \tau_{ij}(t)>0,
I_i(t): \mathbb{R}^{+}\rightarrow \mathbb{R}$ are continuously
periodic functions with period $\omega>0$, $i,j=1,2,\ldots,n$.

By using a maximum function and the Brouwer fixed point theorem, it
was proved in Lu \& Chen(2004) that
\begin{proposition}
Under the conditions that for $i=1,\cdots,n$,
$|g_{i}(x+h)-g_{i}(x)|\le G_{i}|h|$, $|f_{i}(x+h)-f_{i}(x)|\le
F_{i}|h|$ and $-d_{i}(t)+\sum\limits_{j=1
}^{n}G_{j}|a_{ij}(t)|+\sum\limits_{j=1}^{n}F_{j}|b_{ij}(t)| <-\eta$,
the system (\ref{periodic}) has an $\omega-$periodic solution
$x(t)$, and there exists $\alpha>0$ such that for any solution
$u(t)=[u_{1}(t),\cdots,u_{n}(t)]$, we have
\begin{eqnarray}
||u(t)-x(t)||=O(e^{-\alpha t }),\quad t\rightarrow\infty.
\end{eqnarray}

\end{proposition}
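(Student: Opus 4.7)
The proof plan decomposes into two independent parts: (a) proving that any two solutions of (\ref{periodic}) attract each other exponentially, which will follow almost immediately from Corollary \ref{corMain} together with the exponential refinement of Theorem \ref{thmMain}; and (b) constructing an $\omega$-periodic solution through a Cauchy-sequence argument powered by (a). Part (b) replaces the usual Brouwer-fixed-point construction but uses only material from this paper.

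For part (a), let $u(t)$ and $v(t)$ be two solutions and put $z_{i}(t)=|u_{i}(t)-v_{i}(t)|$, $z(t)=\max_{i}z_{i}(t)$. The Lipschitz bounds on $g_{j}$ and $f_{j}$ give
\begin{align*}
D^{+}z_{i}(t)\le -d_{i}(t)z_{i}(t)+\sum_{j=1}^{n}G_{j}|a_{ij}(t)|z_{j}(t)+\sum_{j=1}^{n}F_{j}|b_{ij}(t)|z_{j}(t-\tau_{ij}(t)),
\end{align*}
and, choosing at each $t$ an index $i_{t}$ attaining $z(t)=z_{i_{t}}(t)$ and setting $a(t)=d_{i_{t}}(t)-\sum_{j}G_{j}|a_{i_{t}j}(t)|$, $b(t)=\sum_{j}F_{j}|b_{i_{t}j}(t)|$, I obtain $D^{+}z(t)\le -a(t)z(t)+b(t)\sup_{t-\tau_{\max}\le s\le t}z(s)$. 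The hypothesis $-d_{i}(t)+\sum_{j}G_{j}|a_{ij}(t)|+\sum_{j}F_{j}|b_{ij}(t)|<-\eta$ forces $a(t)-b(t)>\eta$ for every $t$, so $\mu_{-}(0,+\infty)=0$, $\mu_{\eta}(0,+\infty)=+\infty$, and moreover $\mu_{\eta}(t_{k},t_{k+1})=(N+1)\tau_{\max}\ge\epsilon$ for a fixed $\epsilon>0$. Corollary \ref{corMain} with the exponential refinement of Theorem \ref{thmMain} then produces $\widetilde{K},\alpha>0$ with
\begin{align*}
\|u(t)-v(t)\|\le\widetilde{K}\max_{-\tau_{\max}\le s\le 0}\|u(s)-v(s)\|\,e^{-\alpha t}.
\end{align*}

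For part (b), I first show that every solution $u(t)$ of (\ref{periodic}) is uniformly ultimately bounded. This comes from running the same generalized Halanay estimate directly on $|u_{i}(t)|$, which now carries an extra inhomogeneous forcing from $g_{j}(0)$, $f_{j}(0)$ and the continuous periodic $I_{i}(t)$; the $\eta$-gap absorbs this forcing into a uniform bound $M$. After shifting time if needed, assume $\|u(t)\|\le M$ for all $t\ge -\tau_{\max}$. Set $u^{k}(t)=u(t+k\omega)$. Since the coefficients, delays, and inputs are all $\omega$-periodic, each $u^{k}$ is again a solution of (\ref{periodic}). Applying part (a) to $u$ and its time-shift $u(\,\cdot\,+m\omega)$ gives
\begin{align*}
\|u^{k}(s)-u^{k+m}(s)\|=\|u(s+k\omega)-u(s+k\omega+m\omega)\|\le 2M\widetilde{K}\,e^{-\alpha(s+k\omega)}
\end{align*}
for all $s\ge -\tau_{\max}$ and all $m\ge 0$, provided $k\omega\ge\tau_{\max}$. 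The right-hand side tends to $0$ as $k\to\infty$ uniformly in $s\ge -\tau_{\max}$ and $m\ge 0$, so $\{u^{k}\}$ is Cauchy in the uniform norm on $[-\tau_{\max},\infty)$. Its limit $x(t)$ satisfies (\ref{periodic}) by passing to the limit in the integral form, and $x(t+\omega)=\lim_{k}u^{k+1}(t)=\lim_{k}u^{k}(t)=x(t)$ makes $x$ $\omega$-periodic. The exponential attraction claimed in the proposition is then simply part (a) applied to the arbitrary solution $u$ and this $x$.

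The main obstacle, as I see it, is the ultimate boundedness step: one must verify that the generalized Halanay inequality remains usable when a bounded inhomogeneous forcing is added, which amounts to re-running the case analysis of Lemma \ref{lemEstimationMain} with an extra driving term and tracking how the $\eta$-gap absorbs it into a uniform offset. Once this is in place, part (a) is a direct application of Corollary \ref{corMain}, and the Cauchy-sequence construction in part (b) is essentially routine.
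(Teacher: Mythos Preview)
Your proposal is correct, and it is close in spirit to the argument the paper gives for its own Theorem \ref{thmPeriodic} (the paper does not actually prove this proposition; it cites Lu \& Chen (2004), who use the Brouwer fixed point theorem, and then proves the more general Theorem \ref{thmPeriodic} by a Cauchy-sequence construction). So you are already on the paper's preferred track rather than the Brouwer one.

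The one substantive difference is your detour through ultimate boundedness, which you flag as ``the main obstacle.'' The paper's construction avoids it entirely. Instead of comparing $u$ with an arbitrary shift $u(\cdot+m\omega)$ (which forces you to bound the initial discrepancy by $2M$ and hence to first prove boundedness), the paper looks only at the \emph{single-period} increment $\bar u(t)=u(t)-u(t-\omega)$. Since $u(\cdot-\omega)$ is again a solution, part (a) applied to this fixed pair gives $\|\bar u(t)\|\le C e^{-\alpha t}$ with $C$ depending only on the initial segment of $u$, no global bound needed. The telescoping identity $u(t^{*}+k\omega)-u(t^{*})=\sum_{m=1}^{k}\bar u(t^{*}+m\omega)$ is then a convergent (geometric) series, so $u(t^{*}+k\omega)$ is Cauchy for each $t^{*}$, and the limit $x(t^{*})$ is the periodic solution. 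This sidesteps your inhomogeneous-Halanay step completely; what you identified as the hard part is in fact unnecessary. Everything else in your plan---the reduction to the scalar inequality via $z(t)=\max_i z_i(t)$, the appeal to the exponential refinement of Theorem \ref{thmMain}, and passing to the limit in the integral form---matches the paper.
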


Obviously, if the requirement $-d_{i}(t)+\sum\limits_{j=1
}^{n}G_{j}|a_{ij}(t)|+\sum\limits_{j=1}^{n}F_{j}|b_{ij}(t)| <-\eta$,
$i=1,2,\cdots,n$  is not satisfied, then the Brouwer fixed point
theorem is no longer applicable. Here, as application of the
theoretical results, we can prove the same result without this
requirement. First, we make the following assumption.
\begin{assumption}\label{assumPeriodic}
\begin{enumerate}
\item $d_{i}(t)>d_{i}>0$, $a_{ij}(t), b_{ij}(t),
0<\tau_{ij}(t)\le \tau_{\max}, I_i(t)$ are continuously periodic
functions of $t$ with period $\omega>0$, $i,j=1,2,\ldots,n$;
\item There exist $G_{i}>0$, $F_{i}>0$, $i=1,\cdots,n$, such that
$|g_{i}(x+h)-g_{i}(x)|\le G_{i}|h|$, $|f_{i}(x+h)-f_{i}(x)|\le
F_{i}|h|$ for each $h\in \mathbb{R}$;
\item There exists $M_{a}>0$, $M_{b}>0$ such that
$0\le d_{i}(t)-\sum\limits_{j=1 }^{n}G_{j}|a_{ij}(t)|\le M_{a},
\sum\limits_{j=1}^{n}F_{j}|b_{ij}(t)|\le M_{b}$;
\end{enumerate}
\end{assumption}
For some $\eta>0$, denote $\overline{S}_{\eta}=\{t\in
[0,\omega]:~d_{i}(t)-\sum_{j=1}^{n}\big[G_{j}|a_{ij}(t)|+F_{j}|b_{ij}(t)|\big]\ge
\eta,~~i=1,\cdots,n\}$, and
$\overline{\mu}_{\eta}=\mu(\overline{S}_{\eta})$. Denote
$\overline{S}_{-}=\{t\in
[0,\omega]:~d_{i}(t)-\sum_{j=1}^{n}\big[G_{j}|a_{ij}(t)|+F_{j}|b_{ij}(t)|\big]<0
\text{ for some }i\}$, and
$\overline{\mu}_{-}=\mu(\overline{S}_{-})$. Let $p$ be the smallest
integer such that $\tau_{\max}\le p\omega$.
\begin{theorem}\label{thmPeriodic}
Under Assumption \ref{assumPeriodic}, if there exists some $\eta>0$,
$N>0$ such that $\mu(\overline{S}_{\eta})>0$ and
\begin{eqnarray}\label{conditionPeriodic}
\frac{\big[e^{M_{b}p(N+1)\overline{\mu}_{-}}-1\big]e^{M_{a}p(N+1)\omega}}{\min\{\frac{1}{M_{a}},pN\overline{\mu}_{\eta}\}}<\frac{\eta}{2},
\end{eqnarray}
then the periodic neural network (\ref{periodic}) has an
$\omega-$periodic solution $x(t)$. Furthermore, for any solution
$u(t)=[u_{1}(t),\cdots,u_{n}(t)]$, we have
\begin{eqnarray}
||u(t)-x(t)||=O(e^{-\alpha t }),\quad t\rightarrow\infty.
\end{eqnarray}

\end{theorem}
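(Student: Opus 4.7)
The approach is in two stages: (1) apply the generalized Halanay inequality (Theorem 1) to show that any two solutions of \eqref{periodic} converge exponentially to each other, and (2) construct the $\omega$-periodic solution as the Cauchy limit of the time-shifted iterates $u(\cdot + k\omega)$ of an arbitrary solution.

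For stage (1), let $u, v$ be solutions and set $z_i(t) = |u_i(t) - v_i(t)|$, $z(t) = \max_i z_i(t)$. Following the reduction in the proof of Theorem 2, the Lipschitz bounds in Assumption 1(2) give
\begin{align*}
D^+ z(t) \le -\tilde{a}(t) z(t) + \tilde{b}(t) \sup_{t - \tau_{\max} \le s \le t} z(s),
\end{align*}
where $\tilde{a}(t), \tilde{b}(t)$ are the coefficients associated with the common $\eta$-condition for the family $\{d_i(\cdot) - \sum_j G_j |a_{ij}(\cdot)|,\ \sum_j F_j |b_{ij}(\cdot)|\}_{i=1}^n$. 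Assumption 1(3) ensures the boundedness hypotheses of \eqref{ineqnSystem1}, and by construction $\{t : \tilde{a}(t) - \tilde{b}(t) \ge \eta\}$ and $\{t : \tilde{a}(t) < \tilde{b}(t)\}$ coincide with the $\omega$-periodic extensions of $\overline{S}_\eta$ and $\overline{S}_-$, respectively.

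It remains to verify the $\eta$-condition of Definition 1 for $(\tilde{a}, \tilde{b})$. Choose the partition $t_k = t_0 + k(N+1)\tau_{\max}$ with $t_0$ selected so that the partition fits the periodic structure favourably. Using $(N+1)\tau_{\max} \le p(N+1)\omega$ together with $\omega$-periodicity, one obtains the uniform estimates
\begin{align*}
\mu_-(t_k, t_{k+1}) \le p(N+1)\overline{\mu}_-, \quad \mu_+(t_k, t_{k+1}) \le p(N+1)\omega, \quad \mu_\eta(t_k, t_{k+1}^-) \ge pN \overline{\mu}_\eta.
\end{align*}
Substituting into clause (ii) of Definition 1 reduces the $\limsup$ condition to exactly the hypothesis \eqref{conditionPeriodic}; clause (i) follows since $pN \overline{\mu}_\eta > 0$. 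Moreover $\mu_\eta(t_k, t_{k+1}^-) \ge pN \overline{\mu}_\eta$ uniformly in $k$, so the exponential form of Theorem 1 applies and furnishes constants $K, \alpha > 0$ with $\|u(t) - v(t)\|_\infty \le K e^{-\alpha t} \max_{s \in [-\tau_{\max}, 0]} \|u(s) - v(s)\|_\infty$.

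For stage (2), fix any solution $u$ (global existence is standard from the Lipschitz hypothesis). By $\omega$-periodicity of the coefficients, $u_k(t) := u(t + k\omega)$ is again a solution of \eqref{periodic}, so applying stage (1) to the pair $(u_k, u_{k+1})$ shows that $\{u_k\}$ is Cauchy uniformly on compact subsets of $\mathbb{R}$. The limit $x(t) = \lim_k u_k(t)$ is itself a solution by continuous dependence on initial data, and the identity $u_{k+1}(t) = u_k(t + \omega)$ passes to the limit to give $x(t + \omega) = x(t)$. Applying stage (1) once more with $v = x$ yields the announced exponential stability $\|u(t) - x(t)\|_\infty = O(e^{-\alpha t})$ for every solution $u$. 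The principal obstacle is the $\eta$-condition verification: one must select $t_0$ carefully and exploit $\tau_{\max} \le p\omega$ throughout to convert interval-length bounds into integer period counts, thereby translating the single-period measures $\overline{\mu}_\eta, \overline{\mu}_-$ into the uniform estimates above.
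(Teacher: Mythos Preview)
Your proposal is correct and follows essentially the same route as the paper: reduce the difference of two solutions to a scalar Halanay-type inequality, verify the $\eta$-condition via the periodic measure bounds (the paper replaces $\tau_{\max}$ by $p\omega$ and invokes Theorem~\ref{thmMain} with only the terse remark that $S_\eta(0,\omega)\supseteq\overline{S}_\eta$, $S_-(0,\omega)\subseteq\overline{S}_-$, whereas you spell out the measure estimates explicitly), and then obtain the periodic solution as $x(t)=\lim_{k\to\infty}u(t+k\omega)$. The only cosmetic differences are that the paper works directly with $\bar u(t)=u(t)-u(t-\omega)$ rather than first proving a general two-solution contraction, and it shows the limit $x$ is a solution by passing to the limit in the integral form of \eqref{periodic} rather than appealing to continuous dependence.
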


\begin{proof}
First, we prove the existence of an $\omega$ period solution $x(t)$.
Let $u(t)=[u_{1}(t),\cdots,u_{n}(t)]^{\top}$ be an arbitrary
solution of \eqref{periodic}. Let $\bar{u}(t)=u(t)-u(t-\omega)$,
then we have:
\begin{eqnarray*}
\frac{d\bar{u}_{i}(t)}{dt}=-d_{i}(t)\bar{u}_{i}(t)
+\sum_{j=1}^{n}a_{ij}(t)[g_{j}(u_{j}(t))-g_{j}(u_{j}(t-\omega))]\\
+\sum_{j=1}^{n}b_{ij}(t)[f_{j}(u_{j}(t-\tau_{ij}(t)))-f_{j}(u_{j}(t-\tau_{ij}(t)-\omega))].
\end{eqnarray*}
Let $v(t)=\max_{i}\{|\bar{u}_{i}(t)|\}$, and denote $i_{t}$ the
index such that $v(t)=|\bar{u}_{i_{t}}(t)|$, then we have:
\begin{eqnarray*}
D^{+}v(t)\le
-d_{i_{t}}(t)v(t)+\sum_{j=1}^{n}|G_{j}a_{i_{t}j}(t)|v(t)+\sum_{j=1}^{n}F_{j}|b_{i_{t}j}(t)|\sup_{0\le
s\le p\omega}v(t-s).
\end{eqnarray*}
From definition, we have $S_{\eta}(0,\omega)\supseteq
\overline{S}_{\eta}$ and $S_{-}(0,\omega)\subseteq
\overline{S}_{-}$. Let $\tau_{\max}=p\omega$, from Theorem
\ref{thmMain}, $v(t)$ will converge to zero exponentially. Then for
any given $t^{*}$, the sequence
$\{\sum_{m=1}^{k}\bar{u}(t^{*}+k\omega)\}$, $k=1,2,3,\cdots$ is a
Cauchy sequence. Thus there exists $x(t^{*})\in \mathbb{R}^{n}$ such
that $\lim_{k\to
\infty}\sum_{m=1}^{k}\bar{u}(t^{*}+k\omega)=x(t^{*})$. From the
definition of $\bar{u}(t)$, this implies
\begin{eqnarray*}
\lim_{k\to\infty}u(t^{*}+k\omega)=x(t^{*}).
\end{eqnarray*}
Since
$\lim_{k\to\infty}u(t^{*}+k\omega)=\lim_{k\to\infty}u(t^{*}+\omega+k\omega)$,
it is easy to see that $x(t^{*})=x(t^{*}+\omega)$.

For any $t_{1}>0$, $t_{2}>0$,
\begin{eqnarray*}
x_{i}(t_{2})-x_{i}(t_{1})&=&\lim_{k\to\infty}[u_{i}(t_{2}+k\omega)-u_{i}(t_{1}+k\omega)]\\
&=&\lim_{k\to\infty}\int_{t_{1}}^{t_{2}}\big[-d_{i}(t)u_{i}(t+k\omega)
+\sum_{j=1}^{n}a_{ij}(t)g_{j}(u_{j}(t+k\omega))\\
&&+\sum_{j=1}^{n}b_{ij}(t)f_{j}(u_{j}(t+k\omega-\tau_{ij}(t)))\big]dt\\
&=&\int_{t_{1}}^{t_{2}}\big[-d_{i}(t)x_{i}(t)+\sum_{j=1}^{n}a_{ij}(t)g_{j}(x_{j}(t))\\
&&+\sum_{j=1}^{n}b_{ij}(t)f_{j}(x_{j}(t-\tau_{ij}(t)))\big]dt.
\end{eqnarray*}
Therefore, $x_{i}(t)$ is absolutely continuous and
\begin{eqnarray*}
\frac{dx_{i}(t)}{dt}&=&-d_{i}(t)x_{i}(t)+\sum_{j=1}^{n}a_{ij}(t)g_{j}(x_{j}(t))
+\sum_{j=1}^{n}b_{ij}(t)f_{j}(x_{j}(t-\tau_{ij}(t))).
\end{eqnarray*}
Thus, $x(t)$ is a periodic solution of \eqref{periodic}.

Again, let $u(t)=[u_{1}(t),\cdots,u_{n}(t)]^{\top}$ be an arbitrary
solution of \eqref{periodic}. Denote $\tilde{u}(t)=u(t)-x(t)$, and
$\tilde{v}(t)=\max_{i}\{|\tilde{u}_{i}(t)|\}$. Then, using an
argument similar as above, we can show that $\tilde{v}(t)$ tends to
zero exponentially. This implies \begin{eqnarray}
||u(t)-x(t)||=O(e^{-\alpha t }),\quad t\rightarrow\infty
\end{eqnarray}
for some $\alpha>0$. The proof is completed.
\end{proof}

From Theorem \ref{thmPeriodic}, we can have the following corollary.
\begin{corollary}
Under Assumption \ref{assumPeriodic}, if
$d_{i}(t)-\sum_{j=1}^{n}|a_{ij}(t)|G_{j}-\sum_{j=1}^{n}|b_{ij}(t)|F_{j}\ge
0$, and there exists $\eta>0$ such that
$\mu(\overline{S}_{\eta})>0$, then the periodic neural network
\eqref{periodic} has an $\omega$-period solution which is
exponentially asymptotically stable.
\end{corollary}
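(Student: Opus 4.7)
The plan is to deduce this corollary directly from Theorem \ref{thmPeriodic} by checking that its hypotheses (in particular condition \eqref{conditionPeriodic}) are automatically satisfied under the stronger pointwise assumption $d_i(t)-\sum_j|a_{ij}(t)|G_j-\sum_j|b_{ij}(t)|F_j\ge 0$.

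First I would observe that this pointwise nonnegativity assumption, combined with Assumption \ref{assumPeriodic}, makes the ``bad'' set $\overline{S}_{-}$ empty: by its definition, $\overline{S}_{-}$ consists of those $t\in[0,\omega]$ at which \emph{some} index $i$ has the quantity $d_i(t)-\sum_j[G_j|a_{ij}(t)|+F_j|b_{ij}(t)|]$ strictly negative, and our hypothesis rules this out. Hence $\overline{\mu}_{-}=\mu(\overline{S}_{-})=0$.

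Next I would substitute $\overline{\mu}_{-}=0$ into the left-hand side of condition \eqref{conditionPeriodic}. The numerator becomes
\begin{eqnarray*}
\bigl[e^{M_b p(N+1)\cdot 0}-1\bigr]\,e^{M_a p(N+1)\omega}=0,
\end{eqnarray*}
while the denominator $\min\{1/M_a,\,pN\overline{\mu}_{\eta}\}$ is strictly positive by virtue of the given hypothesis $\mu(\overline{S}_{\eta})>0$ (the factor $pN$ is a fixed positive integer). Consequently the left-hand side of \eqref{conditionPeriodic} equals $0$, which is of course strictly less than $\eta/2$ for any $\eta>0$. All the remaining hypotheses of Theorem \ref{thmPeriodic}---periodicity, Lipschitz bounds on $g_j,f_j$, and the uniform bounds $M_a,M_b$---are part of Assumption \ref{assumPeriodic} and are therefore in force.

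Thus every hypothesis of Theorem \ref{thmPeriodic} is satisfied (with any choice of the auxiliary integer $N$), and its conclusion yields both the existence of an $\omega$-periodic solution $x(t)$ of \eqref{periodic} and the exponential estimate $\|u(t)-x(t)\|=O(e^{-\alpha t})$ for some $\alpha>0$ and every solution $u(t)$. This gives exponential asymptotic stability of the periodic solution, completing the proof. There is no real obstacle here: the only thing to verify is the vanishing of $\overline{\mu}_{-}$, which is immediate from the strengthened sign hypothesis, so the corollary is essentially a one-line reduction to Theorem \ref{thmPeriodic}.
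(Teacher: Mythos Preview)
Your argument is correct and is exactly the reduction the paper has in mind: the paper simply states that the corollary follows ``from Theorem \ref{thmPeriodic}'' without further detail, and your observation that the extra hypothesis forces $\overline{\mu}_{-}=0$, hence the numerator in \eqref{conditionPeriodic} vanishes, is precisely the intended one-line justification (indeed the paper uses the same reasoning explicitly in the numerical example of Section \ref{secExample}).
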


\section{Numerical Examples}\label{secExample}
In this section, we provide two simple examples with simulation to
illustrate the theoretical results.

\subsection{Delay differential system}

We consider the following delay differential system:
\begin{eqnarray}\label{eqnExample}
\dot{x}(t)=-a(t)x(t)+b(t)x(t-\tau(t)).
\end{eqnarray}
Here we take $\tau(t)=t-\lfloor t\rfloor$, where $\lfloor t\rfloor$
denotes the largest integer that is no greater than $t$. Let
$a(t)\equiv 1$, and $b(t)$ be a step function such that
\begin{eqnarray*}
  b(t)=\left\{
  \begin{array}{rl}
    0.8, & t\in [2k,2k+0.5],\\
    1.2, & t\in [2k+1, 2k+1.002]\\
    1, & \text{otherwise}.
  \end{array}
  \right.
\end{eqnarray*}
Thus, $\tau_{\max}=1$, $M_{a}=1$, and $M_{b}=1.2$. We take
$\eta=0.2$, then
$S_{\eta}(0,+\infty)=\cup_{k=0}^{+\infty}[2k,2k+0.5]$, and
$S_{-}(0,+\infty)=\cup_{k=0}^{+\infty}[2k+1,2k+1+0.002]$. Let
$t_{0}=0$, and $N=1$, then $\mu_{-}(t_{k},t_{k+1})=0.004$,
$\mu_{\eta}(t_{k},t_{k+1}^{-})=0.5$, and we have
\begin{eqnarray*}
\frac{[e^{M_{b}\mu_{-}(t_{k},t_{k+1})}-1]
e^{M_{a}\mu_{+}(N+1)\tau_{\max}}}
{\min\{\frac{1}{M_{a}},\mu_{\eta}(t_{k},t_{k+1}^{-})\}}
=\frac{(e^{1.2\times 0.004}-1)e^{2}}{\min\{1,0.5\}}\simeq 0.0711
<0.1= \frac{\eta}{2}.
\end{eqnarray*}

Then from Theorem \ref{thmMain}, the zero solution of
\eqref{eqnExample} is asymptotically stable. The simulation results
are provided in Fig. \ref{figSimu1}, where the initial value are
chosen randomly.
\begin{figure}
\centering
\includegraphics[width=0.5\textwidth]{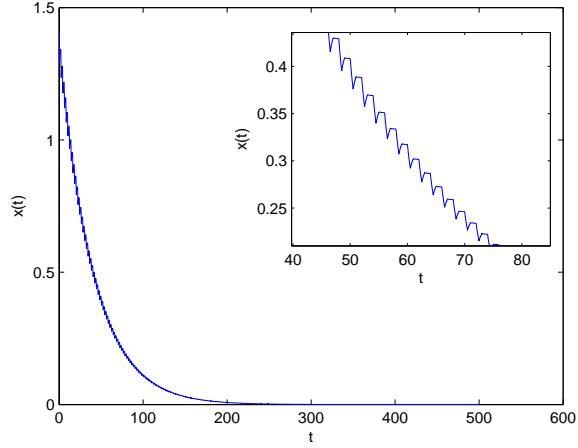}
\caption{Asymptotic stability of the zero solution of Eq.
\eqref{eqnExample}}. \label{figSimu1}
\end{figure}

\subsection{Periodic neural
network with delays} In this simulation, we consider the following
delay periodic neural network with $3$ neurons:
\begin{eqnarray*}
&&\frac{dx_{i}(t)}{dt}=-(2+\sin^{2}(\pi t))x_{i}(t)
+|\sin^{3}(\pi t)|\tanh(x_{i}(t))+\sin^{2}(2\pi t)\tanh(x_{i+1}(t))\\
&&+\cos^{2}(2\pi t)\tanh(x_{i+2}(t))+\sin^{2}(4\pi t)\arctan(x_{i+1}(t-|\sin(2\pi t)|))\\
&&+\cos^{2}(4\pi t)\arctan(x_{i+2}(t-|\cos(2\pi t)|))+\sin(i\pi
t),\quad i=1,2,3.
\end{eqnarray*}
Here, $i+1$ and $i+2$ are understood as $i+1 \bmod 3, i+2 \bmod 3$
if they exceed $3$. Now, we verify that the conditions in Theorem
\ref{thmPeriodic} can be satisfied. In accordance to model
\eqref{periodic}, $d_{i}(t)=2+\sin^{2}(\pi t)$,
\begin{eqnarray*}
[a_{ij}(t)]=\left[
  \begin{array}{ccc}
    |\sin^{3}(\pi t)| & \sin^{2}(2\pi t) & \cos^{2}(2\pi t)\\
    \cos^{2}(2\pi t) & |\sin^{3}(\pi t)| & \sin^{2}(2\pi t) \\
    \sin^{2}(2\pi t) & \cos^{2}(2\pi t) & |\sin^{3}(\pi t)|
  \end{array}\right],
  [b_{ij}(t)]=\left[
  \begin{array}{ccc}
    0 & \sin^{2}(4\pi t) & \cos^{2}(4\pi t)\\
    \cos^{2}(4\pi t) & 0 & \sin^{2}(4\pi t) \\
    \sin^{2}(4\pi t) & \cos^{2}(4\pi t) & 0
  \end{array}\right].
\end{eqnarray*} And we can choose $\tau_{\max}=1$, $F_{i}=G_{i}=1$.
Thus,
\begin{eqnarray*}
&0< d_{i}(t)-\sum_{j=1}^{3}G_{j}|a_{ij}(t)|=1+\sin^{2}(\pi
t)-|\sin^{3}(\pi t)|\le \frac{29}{27},\\
&\sum_{j=1}^{3}F_{j}|b_{ij}(t)|=\sum_{j=1}^{3}|b_{ij}(t)|=1.
\end{eqnarray*}
This implies that we can set $M_{a}=29/27$, $M_{b}=1$. On the other
hand,
$d_{i}(t)-\sum_{j=1}^{3}[G_{j}|a_{ij}(t)|+F_{j}|b_{ij}(t)|]=\sin^{2}(\pi
t)-|\sin^{3}(\pi t)|\ge 0$. This means that $\overline{\mu}_{-}=0$,
so the left-hand term in Ineq. \eqref{conditionPeriodic} is $0$ and
Ineq. \eqref{conditionPeriodic} holds for any $\eta>0$. Since the
maximum of
$d_{i}(t)-\sum_{j=1}^{3}[G_{j}|a_{ij}(t)|+F_{j}|b_{ij}(t)|]$ is
$2/27$, we can choose $\eta=1/27$, and from the continuity of
$d_{i}(t)-\sum_{j=1}^{3}[G_{j}|a_{ij}(t)|+F_{j}|b_{ij}(t)|]$, we
have $\mu(\overline{S}_{\eta})>0$. So the requirements in Theorem
\ref{thmPeriodic} are satisfied and this network has a periodic
solution which is asymptotically stable. This is verified by the
simulation results in Fig. \ref{figPeriodicNN}.

\begin{figure}
\centering
\includegraphics[width=0.5\textwidth]{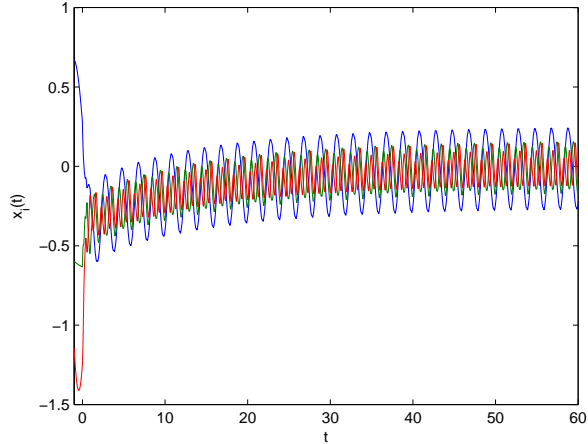}
\caption{Asymptotic stability of periodic solutions in a delayed
periodic neural networks.}\label{figPeriodicNN}
\end{figure}

\section{Conclusions}\label{secConclusion}
In this paper, we discuss generalized Halanay inequality and its
applications. First, we prove a new generalized Halanay inequalities
under less restricted conditions, which are useful for the
asymptotic stability of the zeros solution of a delayed differential
equation. To our knowledge, these conditions are the least
restricted ones known. We also give two applications of the
theoretical results. First, we provide more general sufficient
conditions for the self synchronization of the neural networks with
time varying delays. Then, under more relaxed requirements, we prove
a sufficient condition for the existence and exponential stability
periodical solutions for a class of neural networks with periodic
coefficients and time varying delays. Yet, we only consider bounded
time varying delays. The case of unbounded time-varying delays is
also very important and will be our next research topic.

\noindent{\bf\Large References}
\begin{description}

\item  
Baker, C. (2010). Development and application of Halanay-type
theory: Evolutionary differential and difference equations with time
lag, {\it Journal of Computational and Applied Mathematics}, 234,
2663-2682.

\item  
Baker, C. \& Tang, A. (1996). Generalized Halanay inequalities for
Volterra functional differential equations and discretized versions,
Invited plenary talk, in: Volterra Centennial Meeting, UTA
Arlington.


\item  
Chen, T. (2001). Global exponential stability of delayed Hopfield
neural networks,  {\it Neural Networks}, 14(8), 977-980.

\item 
Chen, T. \& Lu, W. (2003). Stability analysis of dynamical neural
networks, {\it IEEE International Conf. Neural Networks $\&$ Signal
Processing, Nanjing, China}, 1, 112-116.

\item  
Chen, T., Lu, W., \& Chen, G. (2005). Dynamical behaviors of a large
class of general delayed neural networks, {\it Neural Computation},
17, 949-968.

\item
Gil', M. (2013). {\it Stability of vector differential delay
equations}, Birkhauser.

\item  
Halanay, A. (1966). {\it Differential Equations}, New York: Academic
Press.


\item  
Liu, B., Lu, W., \& Chen, T. (2011a). Global almost sure self
synchronization of Hopfield neural networks with randomly switching
connetions, {\it Neural Networks}, 24, 305-310.

\item  
Liu, B., Lu, W., \& Chen, T. (2011b). Generalized Halanay inequality
and their applications to  neural networks with unbounded
time-varying delays, {\it IEEE Transactions on Neural Networks},
22(9), 1508-1513.

\item  
Liu, B., Lu, W., \& Chen, T. (2012). Stability analysis of some
delay differential inequalities with small time delays and its
applications, {\it Neural Networks}, 33, 1-6.


\item  
Lu, W., \&Chen, T. (2004). On periodic dynamical systems, {\it
Chinese Annals of Mathematics Series B}, 25(B:4), 455-462.

\item  
Mohamad, S., \& Gopalsamy, K. (2000). Continuous and discrete
Halanay-type inequalities, {\it Bulletin of Australian Mathematical
Society}, 61, 371-385.






\item  
Shen, Y., \& Wang, J. (2009). Almost sure exponential stability of
recurrent neural networks with Markovian switching, {\it IEEE
Transactions on Neural Networks}, 20(5), 840-855.

\item  
Wen, L., Yu, Y., \& Wang, W. (2008). Generalized Halanay
inequalities for dissipativity of Volterra functional differential
equations, {\it Journal of Mathematical Analysis and Applications},
347, 169-178.


\end{description}

\end{document}